\newtheorem{proposition}{Proposition}
\newtheorem{thm}{Theorem}
\newtheorem{lemma}{Lemma}
\newtheorem{definition}{Definition}
\newtheorem{example}{Example}
\newtheorem{remark}{Remark}
\newtheorem{assumption}{Assumption}
\DeclareMathOperator*{\argmin}{\arg\!\min}
\DeclareMathOperator*{\argmax}{\arg\!\max}
\newcommand{\tr}{{\mathbf{tr}}}
\providecommand{\inner}[2]{\langle {#1} , {#2} \rangle}		   
\begin{document}

\title{Competitive Online Algorithms for Resource Allocation over the Positive Semidefinite Cone}

\author{Reza Eghbali\thanks{Tetration Analytics, Cisco Systems, Inc. Palo Alto, CA 94301 
	\texttt{reghbali@cisco.com, eghbali@uw.edu}}
	\and James Saunderson\thanks{Department of Electrical and Computer Systems Engineering, Monash University, Victoria 3800, Australia, \texttt{james.saunderson@monash.edu}}
	\and Maryam Fazel\thanks{Department of Electrical Engineering, University of Washington, Seattle, WA 98195
	\texttt{mfazel@uw.edu}}}

\maketitle

\begin{abstract}
We consider a new and general online resource allocation problem,  where the
goal is to maximize a function of a positive semidefinite (PSD) matrix with a
scalar budget constraint. The problem data arrives online, and the algorithm
needs to make an irrevocable decision at each step. Of particular interest are
classic experiment design problems in the online setting, with the algorithm
deciding whether to allocate budget to each experiment as new experiments
become available sequentially.

We analyze two greedy primal-dual algorithms and provide bounds on their
competitive ratios.  Our analysis relies on a smooth surrogate of the objective
function that needs to satisfy a new diminishing returns (PSD-DR) property
(that its gradient is order-reversing with respect to the PSD cone).  Using the
representation for monotone maps on the PSD cone given by L\"owner's theorem,
we obtain a convex parametrization of the family of functions satisfying
PSD-DR. We then formulate a convex optimization problem to directly optimize
our competitive ratio bound over this set. This design problem can be solved
\emph{offline} before the data start  arriving. The online algorithm that uses
the designed smoothing is tailored to the given cost function, and enjoys a
competitive ratio at least as good as our optimized bound.  We provide examples
of computing the smooth surrogate for D-optimal and A-optimal experiment
design, and demonstrate the performance of the custom-designed algorithm.
\end{abstract}

\section{Introduction} \label{sec:intro}

Online resource allocation problems and algorithms have been traditionally
studied in operations research (e.g., revenue management
\cite{agrawal2009dynamic} and references therein), online network routing
\cite{buchbinder2006improved} and computer science (e.g., online packing and
covering in linear or convex case \cite{buchbinder2009design,azar2016online},
online welfare maximization \cite{kapralov2013online}).  In recent years,
applications in online advertising such as the Adwords problem
\cite{mehta2007adwords} have garnered renewed interest in this topic.  In all
these applications, the demands for resources arrive in an online, sequential
fashion, and resource allocation also happens online. For example, in online
revenue management, customers arrive sequentially, reveal their demand for the
resources and offer a bid price. The inventory owner then needs to make an
irrevocable decision about the latest customer's offer, without knowing future
demands, while aiming to maximize his or her revenue.  A feature in these
problems is that the variables denoting allocation amounts, the bid prices, and
the coefficients in the resource constraints are all \emph{nonnegative}, thus a
resource can only be used up as time goes on (resource usage is nondecreasing)
while the total available resource is fixed.  In the context of linear
programming (LP), these are sometimes called ``packing" problems, e.g.,
\cite{molinaro2013geometry,buchbinder2009design}.  For an overview of online
linear programming and more generally online convex problems with different
models for online information arrival, we refer the reader
to~\cite{legrain2016stochastic,gupta2016experts,agrawal2015fast,eghbali2014exponentiated}. 

In this paper, we consider a new and general online resource allocation
problem, where the objective is a function of a positive semidefinite (PSD)
matrix (thus the problem is defined on the PSD cone $S_{+}^n$), with a scalar
budget constraint.  The problem can be described as follows: At round $t$, the
algorithm receives a matrix $A_t \in S_{+}^n$ and scalar $c_t > 0$, and then
chooses $x_t \in [0,1]$. The goal of the algorithm is to maximize
$H\left(\sum_{t=1}^{m} A_t x_t\right)$ subject to the budget constraint
$\sum_{t=1}^{m} c_t x_t \leq b$. The offline problem can be written as
\begin{equation}\label{main}
\mbox{maximize}\quad H\left( \sum_{t=1}^{m} A_t x_t\right) \qquad\mbox{subject to}\quad
	\begin{cases} 
		\sum_{t=1}^{m}c_t x_t \leq b,&\\
		0 \leq x_t \leq 1, \;\; t=1,\ldots,m.
	\end{cases}
\end{equation}
We assume that $H: S_{+}^n \rightarrow \mathbf{R}$ is a monotone, concave \emph{trace function}, i.e.,
\vspace{-.01in}
\[
H\left(X\right) = {\textstyle \sum_{i=1}^{n} } h\left(\lambda_i\left(X\right)\right),
\]
where $h: \mathbf{R}_{+} \rightarrow \mathbf{R}$ is a monotone concave function
and $\lambda_i(X)$ denotes the $i$th eigenvalue of $X$. Note that analogous to
packing LPs, $\sum_{s=1}^t  A_s x_s$ is nondecreasing with respect to the PSD
cone. In other words, 
\[ \sum_{s=1}^{t}A_sx_s \preceq \sum_{s=1}^{t+1}A_sx_s\quad\textup{for all $t$,}\]
where $X \preceq Y$ means that $Y-X$ is positive semidefinite. 

In the online algorithms literature, various scenarios have been considered for
the information being revealed to the algorithm (the sequence of $A_t$ and $c_t$): from the worst-case
adversarial model to various stochastic models. In this paper, since we are interested in
understanding the limits of performance and providing competitive ratio bounds,
we assume an adversarial model, where little is known about the arriving data.
One could use techniques similar to ours in other scenarios as well, which we
leave for future work. 

Our setup covers online versions of problems such as \emph{sensor selection} or
\emph{experiment design}, and \emph{graph formation} \cite{cdc2016formation}. 
In these problems, $A_t = a_t a_t^T $ for some $a_t \in \mathbf{R}^n$. In online
experiment design, the vector $a_t$ is an experiment or measurement vector that
provides linear noisy measurements of an unknown vector $w$, i.e.,
$\xi_t=\inner{a_t}{w} + n_t$, where $n_t$ is Gaussian noise and $c_tx_t$ is the
experiment cost. In this paper, we consider two algorithms: one that assigns
fractional values to $x_t$ and one that assign integer values to $x_t$. 
The algorithm makes online decisions about the budget to allocate to an
experiment, aiming to minimize various functions of the error covariance matrix
of the maximum a priori estimate of $w$. 

The online experiment design problem can be expressed as \eqref{main}, with different choices of $H$
giving rise to different criteria for optimal experiment design. Examples
include 
$H \left(X \right) = \log\det\left( \epsilon I+X \right)$ for the D-optimal
criterion, $H\left(X\right) = - \tr{\left(\epsilon I+X\right)^{-1}}$  for the
A-optimal criterion, and $H\left(X\right) = - \tr{\left(\epsilon I+X\right)^{-
p}}$ for the $p$th mean criterion. (The prior distribution on $w$ is assumed to
be $\mathcal{N}\left(0, \frac{1}{\epsilon} I \right)$.) 

We use the A-optimal
and D-optimal  criteria, as well as the problem where 
$H(X) = \tr{\left(X\right)}$, as running examples throughout the paper. 
The latter simply recovers a linear program with one packing constraint, and
provides insight into the connections and differences with online Linear Programming results, e.g., \cite{buchbinder2009design} (discussed in section \ref{sec:compratio}).

Our results exploit a crucial property of the objective function $H$ or its
surrogate $H_S$ (discussed in section \ref{sec:smooth-h}) which we refer to as
the \emph{PSD diminishing returns} property. For a function with a scalar
variable, this property simply means the larger the variable, the smaller the
derivative. More generally, we define it as follows. 
\begin{definition}[PSD Diminishing Returns]\label{assumption1} 
	A concave trace function $F$ satisfies \emph{PSD diminishing returns (PSD-DR)} if 
	$\nabla F$ is order-reversing with respect to $S_+^n$, i.e., 
	$$\nabla F\left(U\right) \preceq \nabla F\left(U'\right), \text{  whenever }  U \succeq U' .$$
\end{definition}

\paragraph{Our contributions.} 
Our focus is on developing algorithms with (multiplicative) competitive ratio
guarantees for problems of the form~\eqref{main}. In other words, our
algorithms provide feasible solutions $x_1,x_2,\ldots,x_m$ such that $
H\left(\textstyle{\sum_{t=1}^{m}} A_t x_t\right) - H\left(0\right) \geq
\mbox{cr} \left(P^\star - H\left(0\right)\right)$, where $\textup{cr}$ bounds
the competitive ratio and $P^\star$ is the optimal value
of~\eqref{main}\footnote{To simplify the notation in the rest of the paper,
assume $H(0) = 0$ by replacing $h(u)$ with $h(u) - h(0)$}.  

The algorithms we consider (Algorithms~\ref{algorithm1} and~\ref{algorithm2},
in Section~\ref{sec:alg}) are parameterized by a smoothed surrogate, $H_S$, of
the objective function $H$, and a smoothed penalty function $G_S$ in place of
the budget constraint $\sum_{t=1}^{m}x_t \leq b$.  Our main contribution is a
systematic method, based on solving an offline convex optimization problem, to
design $H_S$ and $G_S$, and to compute a bound on the competitive ratio of the
associated algorithms. 

Our approach builds on the results in \cite{eghbali2016smoothing}, which
considers a general online conic optimization problem and shows that smoothing
the objective function (to obtain a surrogate) before applying a primal-dual
greedy algorithm improves the competitive ratio.  However, prior work was not
able to address the problem of finding the best surrogate (or even representing
the space of suitable surrogates)---except for the special (and restrictive)
case of separable objectives defined on the non-negative orthant, which reduces
to a search over concave scalar functions.
There is no hope of employing a similar strategy for functions defined on the
PSD cone, unless one has access to  representations that lead to a convex
parametrization for PSD-DR functions. The present paper resolves this issue. 

We require the smoothed objective $H_S$ to satisfy the PSD-DR
property. Note that this does not hold for all concave trace
functions (indeed fails to hold for the objective function in A-optimal
experiment design). If the objective function in problem \eqref{main} does not
satisfy this property, our approach allows us to design an appropriate
surrogate $H_S$ that satisfies it, and use this $H_S$ to construct algorithms
with competitive ratio guarantees.  And if the original $H$ does satisfy the
property, our approach still helps improve the competitive ratio; see section
\ref{sec:smooth-h}.  A key observation in this paper is that we can use L\"owner's
theorem, characterizing operator monotone functions, to impose this constraint
in a computationally effective way by requiring it to have a certain integral
representation (see~\eqref{eq:lowner}).

The rest of the paper is structured as follows. In Section~\ref{sec:alg}, we
describe the algorithms. In Sections~\ref{sec:compratio}
and~\ref{sec:comp-ratio-sequential}, we provide the competitive ratio analysis
for the online algorithms described in Section~\ref{sec:alg} and determine how
the competitive ratio depends on the smoothed surrogate of the objective
function.  We set up optimization problems for finding surrogates that maximize
the competitive ratio bound derived in these sections.  Numerical examples are
presented in Section~\ref{sec:numerical-exp}. Related works are discussed in
section \ref{sec:related_work}.


\section{Algorithms}
\label{sec:alg}

In this section we describe the class of online algorithms we consider to solve~\eqref{main}, 
and give bounds on the duality gap achieved by these algorithms. 
The algorithms we consider (Algorithms~\ref{algorithm1} and~\ref{algorithm2},
stated in Section~\ref{sec:algostate}) are (modified) greedy algorithms, where
a primal-dual greedy approach is applied to a smoothed surrogate for the
objective.  These algorithms are instances of the general algorithmic framework
for conic online optimization introduced in~\cite{eghbali2016smoothing}, which
can recover state-of-the-art algorithms developed for
Adwords~\cite{buchbinder2007online}, Adwords with concave
returns~\cite{devanur2012online}, and online LP~\cite{buchbinder2009online}.

Before describing the algorithms, it is helpful to slightly rewrite~\eqref{main} and  
state its dual. Throughout, let $G:\mathbf{R}_+\rightarrow \mathbf{R}$ be the indicator function of the interval $[0,b]$, i.e., 
\begin{equation}	
\label{eq:Gdef}
	G(u) = \begin{cases} 0 & \textup{if $u\in [0,b]$}\\ -\infty & \textup{otherwise.}\end{cases}
\end{equation}
Then we can rewrite~\eqref{main} as
\begin{equation}
	\label{eq:main2}
	\underset{x_1,\ldots,x_m\in [0,1]}{\textup{maximize}}\; H\left(\sum_{t=1}^{m}A_t x_t\right) + G\left(\sum_{t=1}^{m}c_tx_t\right).
\end{equation}

To derive a dual program, we rewrite the primal as:
\begin{align*}
	\label{eq:main-rewritten}
	\underset{x_1,\ldots,x_m\in [0,1]}{\textup{maximize}}&\;\;  H\left(U\right) + G\left(u\right). \\
	\textup{subject to} & \;\; U = \sum_{t=1}^{m}A_t x_t \\
	&\;\; u = \sum_{t=1}^{m} c_t x_t	
\end{align*}
Introducing a dual matrix variable $Y$ and dual scalar variable $z$
corresponding to the two equality constraints, we can write the Lagrangian as
follows:
\begin{align*}
&\mathcal{L}(u,U, x_1, \ldots, x_m, Y, z) = H(U) - \inner{Y}{U} + G(u) - \inner{z}{u} +  \sum_{t=1}^{m}(\inner{A_t}{Y} + c_t z) x_t 
\end{align*}
where ${\rm dom} (\mathcal{L}) = \mathbf{S}^n \times \mathbf{R}   \times
[0,1]^{m} \times \mathbf{S}^n \times  \mathbf{R}$. Maximizing $\mathcal{L}$
over all the primal variables, we can derive the dual of~\eqref{main} in terms
of conjugate functions as
\begin{equation}
	\label{eq:main-dual}
 \underset{z,Y}{\textup{minimize}}\; \sum_{t=1}^{m} \left( \inner{A_t}{Y} + c_t z \right)_{+} - H^*\left(Y\right) - G^*\left(z\right),
\end{equation}
where, for a function $\psi: \mathbf{R}^n \rightarrow \mathbf{R}$, $\psi^*$
denotes the concave conjugate of $\psi$, defined as $\psi^*(y) = \inf_{u}
\inner{y}{u} - \psi(u)$. 
The concave conjugate of $G$ is 
\[ G^*\left(z\right) = \begin{cases} b z & \textup{if $z\leq 0$}\\ 0 & \textup{otherwise.}\end{cases}\]
For a trace function $H(U) = \sum_{i=1}^{n}h(\lambda_i(U))$ we have that
$H^*(Y) = \sum_{i=1}^{n}h^*(\lambda_i(Y))$, a result established
in~\cite{lewis1995convex}. 

\subsection{Statement of algorithms}
\label{sec:algostate}
Recall that our objective function $H$ is a monotone, concave, trace function, and that 
$G$ is the concave indicator function of the set $[0,b]$. 
We consider two online algorithms for~\eqref{main}. Algorithm~\ref{algorithm1} makes sequential 
updates to the primal and dual variables, whereas Algorithm~\ref{algorithm2} makes these updates  
simultaneously. These algorithms require a choice of functions $H_S$ and $G_S$ that are 
smooth surrogates for $H$ and $G$. Throughout, we assume these functions satisfy the following.
\begin{assumption}[Assumptions on $H_S$]
\label{assumption-hs}
We assume that $h_S: O\rightarrow\mathbf{R}$ is concave, differentiable, and satisfies $h_S(0)=0$ and $h_S'(0) = h'(0)$, where $O$ is an open interval that contains $\mathbf{R}_+$.
We assume that $H_S:S_+^n\rightarrow \mathbf{R}$ is the trace function $H_S(U) = \sum_{i=1}^{n}h_S(\lambda_i(U))$.
\end{assumption}

\begin{assumption}[Assumptions on $G_S$]
\label{assumption-gs}
We assume that $G_S:O\rightarrow\mathbf{R}$ is concave, differentiable, and satisfies $G_S(0)=0$ and $G_S'(0) = 0$, where $O$ is an open interval that contains $\mathbf{R}_+$.
\end{assumption}

The problem of \emph{designing} $H_S$ and $G_S$, given $H$ and $G$, is the main focus of the paper, 
and is the subject of Section~\ref{sec:compratio}. For now we merely point out that the results of 
this section hold for any choice of $H_S$ and $G_S$ satisfying the basic assumptions above.

\begin{algorithm}[H]
\caption{Sequential Update}
   \label{algorithm1}
   
 Initialize $\hat{z}_0 = G_S' \left(0\right), \hat{Y}_0 = \nabla H_S\left(0\right)$\;
 
\For{$i\leftarrow 1$ \KwTo $m$}{

  Receive $A_t, c_t$\;
  
  $\hat{x}_t = \left\{\begin{array}{cc} 1, &\quad \textup{if $c_t \hat{z}_{t-1} + \inner{A_t}{\hat{Y}_{t-1}}>0$}\\ 0, &\quad \textup{if $c_t \hat{z}_{t-1} + \inner{A_t}{\hat{Y}_{t-1}}\leq 0$;} \end{array}\right.$
 
 $\hat{Y}_{t} = \nabla H_S\left(\sum_{s=1}^{t} A_s  \hat{x}_s \right)$\;
 
 $\hat{z}_{t} = G_S'\left(\sum_{s=1}^{t} c_s \hat{x}_s\right)$\;
 }
\end{algorithm}
\begin{algorithm}[H]
\caption{Simultaneous Update}
   \label{algorithm2}
 
\For{$i\leftarrow 1$ \KwTo $m$}{

	Receive $A_t, c_t$\;	

	{$\displaystyle{\begin{aligned}
	(\tilde{z}_t, \tilde{Y}_t, \tilde{x}_t) & \in
	\displaystyle{\arg\min_{z,Y }\max_{x \in [0,1]}} \;\inner{Y}{A_t x + \textstyle{\sum_{s=1}^{t-1}} A_s \tilde{x}_s}
	+z\left(c_t x + \textstyle{\sum_{s=1}^{t-1}}c_s \tilde{x}_s\right)
		- H^*_S(Y) - G^*_S(z)\;
	\end{aligned}}$}
 	}
\end{algorithm}

In Algorithm \ref{algorithm2}, at each step the primal variable $\tilde x_t$,
and the dual variables $\tilde Y_t$ and $\tilde z_t$, are updated together, by
finding the saddle point of the Lagrangian (solution to the min-max problem).
To argue for the existence of a saddle point, we use Corollary 11.41b in
\cite{rockafellar1998variational} which requires two conditions to be
satisfied. The first condition is that the following  problem has a nonempty,
bounded set of maximizers,
\begin{align*}
\tilde{x}_t \in \arg\max_{x \in [0,1]} H_{S}\left(A_t x + {\sum_{s=1}^{t-1}} A_s \tilde{x}_s\right) + G_{S}\left( x + {\sum_{s=1}^{t-1}}\tilde{x}_s\right),
\end{align*}
which holds because $[0,1]$ is a compact convex set. The second condition requires
\begin{align*}
{\sum_{s=1}^{t-1}} A_s \tilde{x}_s &\in {\rm int}\left(\left\{- A_t x ~\lvert ~ x \in [0,1]\right\}+ {\rm dom} H_S \right),\\
{\sum_{s=1}^{t-1}} c_s \tilde{x}_s &\in {\rm int}\left(\left\{- c_t x ~ \lvert ~ x \in [0,1]\right\} + {\rm dom} G_S \right),
\end{align*}
where the sum is the Minkowski sum of two sets, and $\rm{int}$ denotes the
interior of a set. This condition is also satisfied since 0 is in the interior
of the domains of $h_S$ and $G_S$ (from Assumptions \eqref{assumption-hs} and
\eqref{assumption-gs}).
   
Algorithm~\ref{algorithm1} can also be interpreted in similar terms, by observing that 
the primal $\hat x_t$ update can be written as 
\begin{equation*}
\hat{x}_t \in \argmax_{x \in [0,1]} \; x (c_t \hat{z}_{t-1} + \inner{A_t}{\hat{Y}_{t-1}}),
\end{equation*}
and using the fact that for a differentiable concave function $\psi$, we have $\nabla \psi(u) = \arg\min_{y} \inner{y}{u} - \psi^*(u)$,
the dual updates can be written as
\begin{align*}
\hat{z}_{t} & = \argmin_{z} \;z\left(\textstyle{\sum_{s=1}^{t}} c_s \hat{x}_s\right) - G_S^*\left(z\right)\quad\textup{and}\\
\hat{Y}_{t} & = \argmin_{Y} \; \inner{Y}{\textstyle{\sum_{s=1}^{t}} A_s  \hat{x}_s }- H_S^*\left(Y\right),
\end{align*}
As such, the sequential algorithm can be viewed as alternating over
maximization and minimization of the Lagrangian over primal and dual variables. 

Algorithm~\ref{algorithm1}, assigns integer values to $\hat{x}_t$ and the dual
variable $\hat z_t$ acts as a decision threshold in the assignment rule for
$\hat x_t$.  For the reader familiar with online learning, we point out a
connection with the typical online learning setup.  The $\hat Y_t$ update step
is the same as \emph{Follow-the-Regularized-Leader (FTRL)} update with $H_S$ as
the regularizer, so this algorithm is running FTRL on the dual problem. 

Algorithm~\ref{algorithm2} clearly requires more computation than the
sequential algorithm, but it is also easier to bound its competitive ratio. We
do not focus on implementation details of this algorithm in this paper, but we
point out that one can decide how accurately to solve the saddle-point
subproblem at each iteration, trading off computation with desired accuracy
(for each subproblem).  To analyze this more practical variation of Algorithm
2, our analysis should take into account how solving each saddle-point problem
to $\epsilon$ accuracy affects the overall competitive ratio.  We leave this
additional analysis to future work. 

\subsection{Duality gap}
\label{sec:duality-gap}
The competitive analysis, and subsequent design of $H_S$ and $G_S$ (in
Section~\ref{sec:compratio}), relies on bounding the duality gap.  Define the
dual value achieved by the sequential and simultaneous algorithms
by\vspace{-5pt}
\begin{align*}
	D_{\rm seq} &= \textstyle{\sum_{t=1}^{m}} \left(\inner{A_t}{\hat{Y}_{t-1}} + c_t\hat{z}_{t-1} \right)_{+} - H^*(\hat{Y}_m) - G^*(\hat{z}_m)\\
	D_{\rm sim} &= \textstyle{\sum_{t=1}^{m}} \left(\inner{A_t}{\tilde{Y}_t} +  c_t\tilde{z}_t \right)_{+} - H^*(\tilde{Y}_m) - G^*(\tilde{z}_m).
\end{align*}
The following two lemmas are essentially taken
from~\cite{eghbali2016smoothingfull}. We include proofs in
Appendix~\ref{app:Dgap} to make the present paper self-contained.
Lemma~\ref{lem:duality-gap} bounds the duality gap, and Lemma~\ref{DseqDstar}
relates $D_{\rm seq}$ and $D_{\rm sim}$ to the dual optimal value $D^\star$,
given an additional assumption on the gradient of $H_S$. 

 \begin{lemma}\label{lem:duality-gap}
	Let $\tilde{x}_t, \tilde{Y}_t$, and $\tilde{z}_t$ and $\hat{x}_t$, $\hat{Y}_t$, and $\hat{z}_t$ 
	denote the $t$th iterate of the simultaneous and sequential algorithms, respectively.
	Then the duality gaps for the two algorithms satisfy the lower bounds 
	\begin{align}\label{duality-gap}
		H_S\left(\textstyle{\sum_{t=1}^{m}}A_t\tilde{x}_t\right) + G_S\left(\textstyle{\sum_{t=1}^{m}}c_t\tilde{x}_t\right) 
		- D_{\rm sim}  &\geq H^*(\tilde{Y}_m) + G^*\left(\tilde{z}_m\right).\\  
		H_S\left(\textstyle{\sum_{t=1}^{m}}A_t\hat{x}_t\right) + G_S\left(\textstyle{\sum_{t=1}^{m}}c_t\hat{x}_t\right) 
		- D_{\rm seq}  &\geq H^*(\hat{Y}_m) + G^*\left(\hat{z}_m\right) + \nonumber\\
		 &\qquad \sum_{t=1}^{m} \inner{ A_t \hat{x}_t}{\hat{Y}_{t} - \hat{Y}_{t-1}} +  
		\sum_{t=1}^{m} \inner{ c_t\hat{x}_t}{ \hat{z}_{t} - \hat{z}_{t-1}}.  \label{duality-gap-seq}
	\end{align}
 \end{lemma}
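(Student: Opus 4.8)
The plan is to prove both bounds by substituting the definitions of $D_{\rm sim}$ and $D_{\rm seq}$ and reducing each claim, after cancellation, to the single inequality
\[
H_S\Big(\textstyle\sum_{t=1}^m A_t x_t\Big) + G_S\Big(\textstyle\sum_{t=1}^m c_t x_t\Big) \;\geq\; \sum_{t=1}^m \big(\inner{A_t}{Y_t} + c_t z_t\big),
\]
evaluated at the relevant iterates, which I then establish by a telescoping supergradient argument. First I would record the optimality conditions common to both algorithms. Writing $U_t=\sum_{s=1}^t A_s x_s$ and $u_t=\sum_{s=1}^t c_s x_s$ for the running primal sums, the dual updates are $Y_t=\nabla H_S(U_t)$ and $z_t=G_S'(u_t)$; this is immediate for Algorithm~\ref{algorithm1} and, for Algorithm~\ref{algorithm2}, follows from the inner minimization over $(Y,z)$ of the saddle-point problem once $\tilde x_t$ is fixed, using the identity $\nabla\psi(u)=\arg\min_{y}\inner{y}{u}-\psi^*(y)$ quoted in the text. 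The primal rule in both cases sets $x_t=1$ when the round-$t$ coefficient $\inner{A_t}{W}+c_t w$ is positive and $x_t=0$ otherwise, so that $x_t(\inner{A_t}{W}+c_t w)=(\inner{A_t}{W}+c_t w)_+$, where $(W,w)$ is the dual pair used at round $t$: $(\tilde Y_t,\tilde z_t)$ for Algorithm~\ref{algorithm2} and $(\hat Y_{t-1},\hat z_{t-1})$ for Algorithm~\ref{algorithm1}.

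Next I would carry out the substitution. Inserting $D_{\rm sim}$ into~\eqref{duality-gap}, the terms $H^*(\tilde Y_m)+G^*(\tilde z_m)$ cancel and each positive part becomes $\tilde x_t(\inner{A_t}{\tilde Y_t}+c_t\tilde z_t)$, leaving exactly the displayed inequality with $(Y_t,z_t)=(\tilde Y_t,\tilde z_t)$. For the sequential algorithm I insert $D_{\rm seq}$ into~\eqref{duality-gap-seq}, again cancel the $H^*,G^*$ terms, and replace each positive part by $\hat x_t(\inner{A_t}{\hat Y_{t-1}}+c_t\hat z_{t-1})$; expanding the correction terms $\sum_t\inner{A_t\hat x_t}{\hat Y_t-\hat Y_{t-1}}+\sum_t\inner{c_t\hat x_t}{\hat z_t-\hat z_{t-1}}$ shows that the contributions involving the lagged variables $\hat Y_{t-1},\hat z_{t-1}$ cancel exactly, leaving the same displayed inequality now with $(Y_t,z_t)=(\hat Y_t,\hat z_t)$. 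In other words the correction terms in~\eqref{duality-gap-seq} exist precisely to absorb the one-step lag between the dual variables used in the primal decision and the final dual iterates.

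It then remains to prove the reduced inequality. Since $Y_t=\nabla H_S(U_t)$ and $U_t-U_{t-1}=A_t x_t$, concavity of $H_S$ gives the supergradient bound $\inner{\nabla H_S(U_t)}{U_t-U_{t-1}}\leq H_S(U_t)-H_S(U_{t-1})$, and summing telescopes to $\sum_t\inner{A_t x_t}{Y_t}\leq H_S(U_m)-H_S(0)=H_S(U_m)$ using $H_S(0)=0$ from Assumption~\ref{assumption-hs}. The identical argument applied to the scalar concave $G_S$, with $G_S(0)=0$ from Assumption~\ref{assumption-gs}, yields $\sum_t c_t x_t z_t\leq G_S(u_m)$. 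Adding the two bounds gives the reduced inequality and hence both claims.

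The main obstacle is bookkeeping rather than any hard estimate: one must track the timing of the dual variables carefully (current iterate for the simultaneous update, previous iterate for the sequential one) and verify that, in the sequential case, the cross terms $\inner{A_t\hat x_t}{\hat Y_t-\hat Y_{t-1}}$ and $\inner{c_t\hat x_t}{\hat z_t-\hat z_{t-1}}$ cancel against the substituted positive parts exactly, with no residual. It is also worth noting that the gradient identity is applied to the surrogates $H_S,G_S$, whereas $H^*,G^*$ in $D_{\rm sim},D_{\rm seq}$ are conjugates of the \emph{true} objective and merely cancel, so no relationship between $H^*$ and $H_S^*$ is required in this lemma.
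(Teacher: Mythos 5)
Your proposal is correct and takes essentially the same route as the paper's own proof: convert each positive part into $x_t\left(\inner{A_t}{Y}+c_t z\right)$ via the primal allocation rule, then bound $\sum_{t=1}^{m} x_t\left(\inner{A_t}{Y_t}+c_t z_t\right)$ by $H_S+G_S$ using the concavity (supergradient) inequality at the running sums and telescoping, with the correction terms in \eqref{duality-gap-seq} exactly absorbing the lag between $(\hat{Y}_{t-1},\hat{z}_{t-1})$ and $(\hat{Y}_t,\hat{z}_t)$. The only blemish is that your displayed ``reduced inequality'' omits the factors $x_t$ on the right-hand side, but your subsequent argument restores them correctly, so this is a typo rather than a gap.
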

\begin{proof}
	See Appendix~\ref{app:Dgap}. 
\end{proof}

This duality gap bound is reminiscent of regret bounds in online learning. In
regret analysis one directly bounds the right hand side of \eqref{duality-gap-seq} in terms of the horizon
and problem parameters. However, bounding the competitive ratio requires more
assumptions and further analysis. The PSD Diminishing Returns (PSD-DR) property 
is used to relate $D_{\rm seq}$ and $D_{\rm sim}$ to the dual optimal value $D^\star$.
\begin{lemma}\label{DseqDstar}
	If $H_S$ satisfies PSD-DR, then
	$D_{\rm seq} \geq D^\star$ and 
	$D_{\rm sim} \geq D^\star$.
\end{lemma}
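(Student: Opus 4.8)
The plan is to read $D_{\rm seq}$ and $D_{\rm sim}$ as perturbations of a genuine evaluation of the dual objective
\[ D(Y,z) = \sum_{t=1}^{m}\left(\inner{A_t}{Y} + c_t z\right)_+ - H^*(Y) - G^*(z) \]
at the final iterate, and to show that PSD-DR forces this perturbation to be nonnegative. The starting observation is that, for \emph{any} pair $(Y,z)$, we have $D(Y,z) \ge D^\star$ simply because $D^\star$ is the optimal (infimum) value of the minimization \eqref{eq:main-dual}. Hence it suffices to prove $D_{\rm seq} \ge D(\hat Y_m, \hat z_m)$ and $D_{\rm sim} \ge D(\tilde Y_m, \tilde z_m)$. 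Comparing definitions, the conjugate terms $-H^*(\,\cdot_m) - G^*(\,\cdot_m)$ already coincide, so the whole content reduces to a termwise comparison of the linear parts: I would show
\[ \left(\inner{A_t}{\hat Y_{t-1}} + c_t \hat z_{t-1}\right)_+ \ge \left(\inner{A_t}{\hat Y_m} + c_t \hat z_m\right)_+ \]
for every $t$, together with the analogous inequality using $\tilde Y_t, \tilde z_t$ in place of $\hat Y_{t-1}, \hat z_{t-1}$.

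The key step is a monotonicity property of the dual iterates. For the sequential algorithm the iterates are $\hat Y_t = \nabla H_S(\sum_{s=1}^{t} A_s \hat x_s)$ and $\hat z_t = G_S'(\sum_{s=1}^{t} c_s \hat x_s)$ by construction. Since each $A_s \succeq 0$ and $\hat x_s \ge 0$, the partial sums $\sum_{s=1}^{t} A_s \hat x_s$ are nondecreasing in the PSD order; PSD-DR (Definition~\ref{assumption1}) then yields $\hat Y_0 \succeq \hat Y_1 \succeq \cdots \succeq \hat Y_m$, so in particular $\hat Y_{t-1} \succeq \hat Y_m$. Likewise, concavity of $G_S$ makes $G_S'$ nonincreasing, and since $\sum_{s=1}^{t} c_s \hat x_s$ is nondecreasing with $c_s > 0$, we get $\hat z_{t-1} \ge \hat z_m$. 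For the simultaneous algorithm I would first invoke the saddle-point optimality conditions, together with the identity $\nabla\psi(u) = \argmin_{y}\inner{y}{u} - \psi^*(y)$ noted in Section~\ref{sec:algostate}, to identify $\tilde Y_t = \nabla H_S(\sum_{s=1}^{t} A_s \tilde x_s)$ and $\tilde z_t = G_S'(\sum_{s=1}^{t} c_s \tilde x_s)$, and then apply the same PSD-DR and concavity argument to conclude $\tilde Y_t \succeq \tilde Y_m$ and $\tilde z_t \ge \tilde z_m$.

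Given these orderings the termwise comparison is immediate: because $A_t \succeq 0$, the relation $\hat Y_{t-1} \succeq \hat Y_m$ gives $\inner{A_t}{\hat Y_{t-1}} \ge \inner{A_t}{\hat Y_m}$, and $c_t > 0$ together with $\hat z_{t-1} \ge \hat z_m$ gives $c_t \hat z_{t-1} \ge c_t \hat z_m$; since $(\cdot)_+$ is nondecreasing, the displayed termwise inequality follows, and summing over $t$ yields $D_{\rm seq} \ge D(\hat Y_m, \hat z_m) \ge D^\star$. The simultaneous case is identical. I expect the only real obstacle to be the identification of the simultaneous iterates as gradients of $H_S$ and $G_S$ evaluated at the running primal sums, since for that algorithm the iterates are defined implicitly through a min-max problem rather than by an explicit gradient update; once that identification is in hand, PSD-DR is exactly the hypothesis that converts ``the primal matrix allocation increases in the PSD order'' into ``the dual matrix iterate decreases in the PSD order,'' which is the single fact driving the entire argument.
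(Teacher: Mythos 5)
Your proposal is correct and follows essentially the same route as the paper's own proof: both reduce the claim to the termwise inequality $\left(\inner{A_t}{Y_t} + c_t z_t\right)_+ \geq \left(\inner{A_t}{Y_m} + c_t z_m\right)_+$, obtained from the monotonicity of the dual iterates ($Y_t \succeq Y_m$ via PSD-DR applied to the nondecreasing primal partial sums, and $z_t \geq z_m$ via concavity of $G_S$), and then observe that the dual objective evaluated at the final iterate dominates $D^\star$. The only cosmetic difference is that you write out the sequential case in detail while the paper writes out the simultaneous one; your identification of $\tilde{Y}_t$ and $\tilde{z}_t$ as gradients via the saddle-point optimality conditions is exactly what the paper uses as well.
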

For completeness the proof is given in Appendix~\ref{app:Dgap}.  We remark that
the PSD-DR property is a special case of the property used in the abstract
framework of~\cite{eghbali2016smoothing} that was applied to general concave
functions on proper cones. In order to develop computational methods to
\emph{design} the function $H_S$ (in Section~\ref{sec:smooth-h}), we will need
to impose the PSD-DR property on $H_S$ in a computationally tractable way. A
central observation of this paper is that this is possible for the PSD-DR
property by exploiting a celebrated theorem of L\"owner. We discuss this in
detail in Section~\ref{sec:smooth-h}.

\section{Competitive ratio analysis}
\label{sec:compratio}
In this section we show how to design $H_S$ and $G_S$ in the sequential algorithm (Algorithm~\ref{algorithm2}) to maximize
a lower bound on the competitive ratio achieved. We discuss the appropriate modifications for the sequential 
algorithm in Section~\ref{sec:comp-ratio-sequential}. In what follows, $e = 2.718\ldots$ is Euler's number.

We first state a technical lemma (Lemma \ref{comp-ratio-pre}) that relates the 
budget consumed and competitive ratio achieved by Algorithm~\ref{algorithm2} to the 
functions $G_S$ and $H_S$.  
The only dependence on the data (the values of $A_t$ and $c_t$)
is via two parameters, $\theta$ and $\Theta$, which are lower and upper 
bounds on $c_t^{-1}\tr(A_t)$ for all $t$. 
We use a parameter $\gamma \geq 1$ as a trade-off parameter between the budget consumption and competitive ratio. The result says that if $H_S$, $G_S$  satisfy certain inequalities that involve $\gamma$ and $\beta$, 
then Algorithm~\ref{algorithm2} achieves a competitive ratio of at
least $ \frac{1}{\gamma/ (e-1) + \beta}$ using a budget
of $G_S'^{-1}(-h'(0) \Theta)$ (the inverse
function of the derivative of $G_S$ evaluated at a point). Concrete values for
these bounds are worked out at the end of this section.
This is a similar style of result to~\cite{buchbinder2009online}, which applies 
to a special class of linear programs. 

\begin{lemma}
\label{comp-ratio-pre}
	Let $G$ be as defined in~\eqref{eq:Gdef}, and let $h$ be concave and monotonically increasing, 
	with corresponding trace function $H$. Let $G_S$ satisfy Assumption~\ref{assumption-gs}, 
	$h_S$ satisfy Assumption~\ref{assumption-hs}, and suppose that the corresponding trace 
	function $H_S$ satisfies the PSD-DR property. Suppose that 
	$\theta \leq c_t^{-1}\tr(A_t) \leq \Theta$ for all $t$ and 
	$u_{\max} \geq \lambda_{\max}\left(\sum_{t=1}^{m}A_t\tilde{x}_t\right)$.\footnote{Note 
	that we could choose, for instance, 
	$u_{\max} = b'\max_{t} c_t^{-1}\lambda_{\max}(A_t)$, but for certain classes of problems better bounds 
	may be available.} Then
	\begin{enumerate}
	\item If for a given $\gamma \geq 1$, $G_S$ satisfies
	\begin{align}
	 \gamma G_S(u) &\leq G^*(G_S'(u)) + \frac{\gamma}{e-1}h(\theta u)\quad\textup{for all $u\in [0,\infty)$}\label{eq:G_gamma},
	 \end{align}
	and there exists $\beta >0$ such that $h_S$ satisfies
	\begin{align}
	\gamma h_S(u) &\leq h^*(h_S'(u)) + \beta h(u)\quad\textup{for all $u\in [0,u_{\max}]$}\label{eq:h_beta},
	\end{align}
	then the iterates $\tilde{x}_1,\ldots,\tilde{x}_m$ of Algorithm~\ref{algorithm2} satisfy
	\begin{equation}
	\label{eq:crbound}
	 \textstyle{H\left(\sum_{t=1}^{m}A_t\tilde{x}_t\right) \geq \frac{1}{\gamma/(e-1)+\beta} D^\star.}
	\end{equation}
	\item The iterates $\tilde{x}_1,\ldots,\tilde{x}_m$ of Algorithm~\ref{algorithm2} also satisfy
	\begin{equation}
	\label{eq:bprime}
	 \sum_{t=1}^{m}c_t\tilde{x}_t \leq b':= \inf\{u: G_S'(u) \leq -h'(0)\Theta\}.
	\end{equation}
	
	\end{enumerate}
\end{lemma}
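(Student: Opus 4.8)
The plan is to prove both parts by relating the primal objective $H(U)$, where $U=\sum_{t=1}^m A_t\tilde x_t$ and $u=\sum_{t=1}^m c_t\tilde x_t$, to the dual value $D_{\rm sim}$ produced by Algorithm~\ref{algorithm2}, and then to invoke Lemma~\ref{DseqDstar}, which gives $D_{\rm sim}\ge D^\star$ under PSD-DR. The saddle-point optimality conditions of the min-max subproblem defining Algorithm~\ref{algorithm2}, together with the identity $\nabla\psi(v)=\argmin_y\inner{y}{v}-\psi^*(v)$ recorded in Section~\ref{sec:algostate}, give $\tilde Y_m=\nabla H_S(U)$ and $\tilde z_m=G_S'(u)$; I would state these at the outset, since both parts hinge on them.

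For part~1 I would start from the simultaneous duality-gap bound \eqref{duality-gap}. Substituting the definition of $D_{\rm sim}$ and cancelling $H^*(\tilde Y_m)+G^*(\tilde z_m)$ turns \eqref{duality-gap} into $H_S(U)+G_S(u)\ge\sum_t(\inner{A_t}{\tilde Y_t}+c_t\tilde z_t)_+\ge 0$, which simultaneously bounds $D_{\rm sim}$ above by $H_S(U)+G_S(u)-H^*(\tilde Y_m)-G^*(\tilde z_m)$ and yields the nonnegativity $H_S(U)+G_S(u)\ge 0$. Next I would lift the scalar hypotheses to matrix statements. Since $H_S,H^*$ are trace functions and $\tilde Y_m=\nabla H_S(U)$ shares the eigenbasis of $U$ with eigenvalues $h_S'(\lambda_i(U))$, summing \eqref{eq:h_beta} over the eigenvalues $\lambda_i(U)\in[0,u_{\max}]$ gives $\gamma H_S(U)\le H^*(\tilde Y_m)+\beta H(U)$. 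For the budget term, $\theta\le c_t^{-1}\tr(A_t)$ gives $\theta u\le\tr(U)$, and concavity with $h(0)=0$ makes $h$ subadditive, so $h(\theta u)\le h(\tr U)\le\sum_i h(\lambda_i(U))=H(U)$; combined with \eqref{eq:G_gamma} this gives $\gamma G_S(u)\le G^*(\tilde z_m)+\tfrac{\gamma}{e-1}H(U)$. Substituting these two inequalities into the upper bound for $D_{\rm sim}$ collects the surrogate terms into $(1-\gamma)(H_S(U)+G_S(u))$, which is $\le 0$ because $\gamma\ge 1$ and $H_S(U)+G_S(u)\ge 0$; what remains is $D_{\rm sim}\le(\beta+\tfrac{\gamma}{e-1})H(U)$, and $D_{\rm sim}\ge D^\star$ then yields \eqref{eq:crbound}.

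For part~2 I would bound the consumed budget directly from PSD-DR. Writing $U_t=\sum_{s\le t}A_s\tilde x_s$ and $u_t=\sum_{s\le t}c_s\tilde x_s$, order-reversal of $\nabla H_S$ and $U_t\succeq 0$ give $\tilde Y_t=\nabla H_S(U_t)\preceq\nabla H_S(0)=h_S'(0)I=h'(0)I$, so $\inner{A_t}{\tilde Y_t}\le h'(0)\tr(A_t)\le h'(0)\Theta c_t$ using $A_t\succeq 0$ and $\tr(A_t)\le\Theta c_t$. The inner maximization over $x\in[0,1]$ forces $\inner{A_t}{\tilde Y_t}+c_t\tilde z_t\ge 0$ whenever $\tilde x_t>0$, whence $\tilde z_t=G_S'(u_t)\ge -h'(0)\Theta$. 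Taking $T$ to be the last index with $\tilde x_T>0$, so that $u=u_m=u_T$, the monotonicity of $G_S'$ and the definition of $b'$ convert $G_S'(u_T)\ge -h'(0)\Theta$ into $u=u_T\le b'$, which is \eqref{eq:bprime}.

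I expect the main obstacle to be the sign bookkeeping in part~1: one must combine \eqref{eq:h_beta}, \eqref{eq:G_gamma}, and the duality-gap inequality so that the uncontrolled surrogate terms $H_S(U)$ and $G_S(u)$ cancel exactly, which succeeds only because $\gamma\ge 1$ and the duality gap itself furnishes $H_S(U)+G_S(u)\ge 0$. The key supporting step is the spectral reduction that lets the scalar inequality \eqref{eq:h_beta} be applied eigenvalue-by-eigenvalue to $H^*(\nabla H_S(U))$; this relies on $\nabla H_S(U)$ and $U$ being simultaneously diagonalizable and on the trace-conjugate formula $H^*(Y)=\sum_i h^*(\lambda_i(Y))$. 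A minor caveat in part~2 is the boundary case where $G_S'$ is flat at the level $-h'(0)\Theta$, which is harmless for strictly concave $G_S$, where $G_S'$ is strictly decreasing and $b'$ is the unique crossing point.
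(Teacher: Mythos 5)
Your proof is correct and follows essentially the same route as the paper: part~1 assembles exactly the same ingredients (the duality-gap bound, the nonnegativity $H_S(U)+G_S(u)\ge 0$, the eigenvalue-wise application of \eqref{eq:h_beta} via $H^*(Y)=\sum_i h^*(\lambda_i(Y))$ and simultaneous diagonalizability, the subadditivity bound $h(\theta u)\le H(U)$, the sign argument from $\gamma\ge 1$, and Lemma~\ref{DseqDstar}), while part~2 rests on the same two facts the paper uses (PSD-DR giving $\tilde{Y}_t\preceq h'(0)I$ and the saddle-point optimality condition linking $\tilde{x}_t>0$ to $\inner{A_t}{\tilde{Y}_t}+c_t\tilde{z}_t\ge 0$), merely phrased as a direct ``last active index'' argument instead of the paper's minimal-index contradiction. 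The flat-$G_S'$ boundary caveat you flag is equally present in the paper's own proof, which asserts the strict inequality $G_S'\left(\sum_{t=1}^{T}c_t\tilde{x}_t\right)<-h'(0)\Theta$ when only a non-strict inequality follows from the definition of $b'$ as an infimum, so your treatment is no less rigorous than the original.
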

Before providing a proof of Lemma~\ref{comp-ratio-pre}, we explain how we use
it in what follows.  As mentioned before, we consider $\gamma$ a design
parameter that trades off between (possible) budget violation and competitive
ratio achieved. The smaller the parameter $\gamma$, the larger the competitive
ratio (evident from \eqref{eq:h_beta} and \eqref{eq:crbound}) and the larger
the budget violation.  The role of $\gamma$ in the amount of budget consumed
and the reason for requiring $\gamma \geq 1$ is discussed in
Section~\ref{sec:smooth-g}.  In Section~\ref{sec:smooth-g} we use
\eqref{eq:G_gamma} and \eqref{eq:bprime} in Lemma~\ref{comp-ratio-pre} to
design $G_S$ to minimize the budget $b'$ consumed for a given $\gamma$. This
design problem requires the parameter $\theta$ only. The parameter $\gamma$
allows us to decouple the design of $G_S$ and $h_S$. 

In Section~\ref{sec:smooth-h}, we use \eqref{eq:h_beta} and \eqref{eq:crbound}
in Lemma~\ref{comp-ratio-pre} to design $H_S$ via solving a convex optimization
problem that maximizes the bound on competitive ratio (by minimizing $\beta$).
This design problem requires the parameter $u_{\max}$ only.

\begin{proof}[{Proof of Lemma~\ref{comp-ratio-pre}}]
	First we show that $\sum_{t=1}^{m}c_t\tilde{x}_t \leq b'$. To do this we use the optimality conditions of 
	the saddle point problem in Algorithm~\ref{algorithm2}. Indeed, for any $t$, we have that 
	\[ \tilde{Y}_t = \nabla H_S\left(\textstyle{\sum_{s=1}^{t}}A_s \tilde{x}_s\right)\quad\textup{and}\quad
	\tilde{z}_t = G_S'\left(\textstyle{\sum_{s=1}^{t}c_s \tilde{x}_s}\right),\]
	and that $\inner{\tilde{Y}_t}{A_t} + c_t\tilde{z}_t < 0$ implies $\tilde{x}_t = 0$. 
	 
	Arguing by contradiction, let $T$ be the smallest index such that 
	$\sum_{t=1}^{T}c_t\tilde{x}_t > b'$. Then, by the definition of $b'$ we have that 
	$\tilde{z}_T = G_S'\left(\sum_{t=1}^{T}c_t\tilde{x}_t\right) < -h'(0)\Theta$.
	Observe that $\sum_{t=1}^{T}A_t\tilde{x}_t \succeq 0$, from which it follows, from the fact that $H_S$ is PSD-DR,
	that $\nabla H_S\left(\sum_{t=1}^{T}A_t\tilde{x}_t\right) \preceq \nabla H_S(0) = h_S'(0)I$. Then
	\begin{equation*}
 	\inner{\tilde{Y}_T}{A_T} + c_T\tilde{z}_T  =
	\inner{\nabla H_S\left(\textstyle{\sum_{t=1}^{T}}A_t \tilde{x}_t\right)}{A_T} + c_T \tilde{z}_T
	\leq h_S'(0)\tr{A_T} + c_T\tilde{z}_T < 0
	\end{equation*}
	where, for the last inequality, we use the fact that $h'_S(0) = h'(0)$ and that $c_T^{-1}\tr{A_T} \leq \Theta$. 
	It follows from the optimality conditions that $\tilde{x}_T =0$. But then 
	$\sum_{t=1}^{T-1}c_t\tilde{x}_t > b'$, contradicting our choice of $T$. It follows that $\sum_{t=1}^{m}c_t\tilde{x}_t \leq b'$.  

	We now turn our attention to the bound on the competitive ratio. 
 	Let $U = \sum_{t=1}^{m} A_t \tilde{x}_t$ and $u = \sum_{t=1}^{m}c_t \tilde{x}_t$, so that 
	$\tilde{Y}_m = \nabla H_S(U)$, and $\tilde{z}_m = G_S'(u)$. 
	By the duality gap bound (Lemma~\ref{lem:duality-gap}) we have
	\begin{align}\label{duality-gap}
		H_S\left(U\right) + G_S\left(u\right) - {D_{\rm sim}} \geq  H^*(\tilde{Y}_m) + G^*\left(\tilde{z}_m\right) .
	\end{align}
	By the primal allocation rule in Algorithm \ref{algorithm2}, we have 
		$\tilde{x}_t \left( c_t \tilde{z}_{t} + \inner{A_t}{\tilde{Y}_{t}}\right) \geq 0$.
	Combining this observation with the concavity of $H_S$ and $G_S$, we get
	\begin{align*}
		H_S\!\left(\textstyle{\sum_{s=1}^{t}} A_s\tilde{x}_s\!\right) +
		G_S\!\left(\textstyle{\sum_{s=1}^{t}}c_s \tilde{x}_s\!\right) - 
		H_S\!\left(\textstyle{\sum_{s=1}^{t-1}}A_s\tilde{x}_s\!\right) - 
		G_S\!\left(\textstyle{\sum_{s=1}^{t-1}}c_s \tilde{x}_s\!\right) \\
		 \geq \tilde{x}_t\!\left(c_t \tilde{z}_{t} + \inner{A_t}{\tilde{Y}_{t}}\right) \!\geq 0.
	\end{align*}
	By taking the sum over $t$ and telescoping the sum we get
	\begin{align}\label{eq:increasing-seq}
	H_S\left(\textstyle{\sum_{s=1}^{m}} A_s\tilde{x}_s\right) + G_S\left(\textstyle{\sum_{s=1}^{m}} c_s\tilde{x}_s\right)\geq 0.
	\end{align} 
	The last preparatory observation we need is the inequality $H(U)\geq h(\theta u)$. To see why this holds, note that 
	because $h$ is monotonically increasing, and $\theta \leq c_t^{-1}\tr(A_t)$ for all $t$,  
	\begin{align}
	 h(\theta u) = h\left(\theta\textstyle{\sum_{t=1}^{m}}c_t\tilde{x}_t\right) &\leq h\left(\textstyle{\tr(\sum_{t=1}^{m}}A_t\tilde{x}_t)\right)\nonumber\\
	& = h(\tr(U))\leq \sum_{i=1}^{n}h(\lambda_i(U)) = H(U)\label{eq:theta-ineq}
	\end{align}
	where the last inequality holds because $h$ is concave and $h(0) \geq 0$ so $h$ is subadditive on $[0,\infty)$.
	Now we can write
	\begin{align*}
		&H\left(U\right)  - {D_{\rm sim}} \\
		 &\geq -H_S\left(U\right) - G_S(u) + H^*\left(Y\right) + G^*\left(z\right) + H\left(U\right) 
			&&\textup{ By~\eqref{duality-gap}} \\
		&\geq -H_S(U) + \left(\gamma-1\right) G_S(u) + \textstyle{\left(1 - \frac{\gamma}{e-1}\right)} H(U) +  H^*(Y)  
			&&\textup{ By~\eqref{eq:G_gamma} and \eqref{eq:theta-ineq}} \\ 
		&\geq -H_S(U) + \left(1-\gamma\right) H_S(U) + \textstyle{\left(1 - \frac{\gamma}{e-1}\right)} H(U) +  H^*(Y) 
			&&\text{ By \eqref{eq:increasing-seq} and $\gamma \geq 1$} \\
		& = -\gamma H_S(U) + \textstyle{\left(1 - \frac{\gamma}{e-1}\right)} H(U) +  H^*(Y)  \\
		&\geq \textstyle{\left(1-\frac{\gamma}{e-1} -\beta\right)} H(U)  &&\text{ By~\eqref{eq:h_beta}}.
	\end{align*}
	Then the result follows from Lemma \ref{DseqDstar}. 
 \end{proof}

These results allow us to search for functions $h_S$ and $G_S$ that satisfy the assumptions of Lemma~\ref{comp-ratio-pre}. 
We use $\gamma$ as a design parameter, controlling the trade-off between competitive ratio and the (possible) budget violation. 

\subsection{Smoothing the budget penalty function $G$, taking $h$ into account.}  
\label{sec:smooth-g}
For a fixed choice of $\gamma$, our aim is
to design $G_S$ so as to minimize $b'$. We do so by explicitly constructing 
$G_S'(u)$ such that~\eqref{eq:G_gamma} is satisfied with equality for all $u\geq 0$, and showing that such a $G_S'$ is optimal, 
in the sense that for a given $\gamma\geq 1$, defining $G_S$ via~\eqref{G_S} minimizes the bound $b'$ on 
the budget consumed by the simultaneous algorithm.
\begin{proposition}
Let $\bar{G}_S$ be any function that satisfies~\eqref{eq:G_gamma}. Let \footnote{We can extend the domain of $G_S$ to negative reals by letting $G_S  = 0$ on $\mathbf{R}_{-}$ to satisfy the technical assumption on the domain of $G_S$ in Assumption \eqref{assumption-gs}.}
\begin{align}\label{G_S}
G_S'(u) =  - \frac{\gamma}{b (e-1)}
	\int_{0}^{u} \exp\left(\frac{\gamma}{b} (u-v)\right) \theta h'\left(\theta v\right) \; d v \quad \textup{for  } u \geq 0 
\end{align}
Then $\bar{G}_{S}'(u) \geq G_S'(u)$ for all $u\geq 0$ and so
\[ \inf\{u: G_S'(u) \leq -h'(0)\Theta\} \leq \inf\{u: \bar{G}_S'(u) \leq -h'(0)\Theta\}.\]
\end{proposition}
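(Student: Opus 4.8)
The plan is to recognize that the extremal $G_S$ should saturate the constraint \eqref{eq:G_gamma}, that \eqref{G_S} specifies exactly this saturating choice, and then to run a Gr\"onwall-type differential-inequality comparison between an arbitrary feasible $\bar{G}_S$ and this extremal $G_S$. The comparison is first established at the level of the functions and only then bootstrapped to their derivatives.

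First I would record the structural facts about the $G_S$ defined through \eqref{G_S}. Since $h$ is increasing we have $h'\geq 0$, so the integrand in \eqref{G_S} is nonnegative while the prefactor is negative; hence $G_S'(u)\leq 0$ for all $u\geq 0$, and in particular we are on the branch where $G^*(G_S'(u)) = b\,G_S'(u)$. I would then check that \eqref{G_S}, together with $G_S(0)=0$, is precisely the solution of the linear first-order ODE obtained by imposing equality in \eqref{eq:G_gamma} on this branch, namely
\[
b\,G_S'(u) - \gamma\,G_S(u) = -\tfrac{\gamma}{e-1}\,h(\theta u), \qquad G_S(0)=0,\ G_S'(0)=0 .
\]
Solving this with the integrating factor $e^{-\gamma u/b}$ gives $G_S(u) = -\frac{\gamma}{b(e-1)}\int_0^u e^{\frac{\gamma}{b}(u-v)} h(\theta v)\,dv$; differentiating and integrating by parts (using $h(0)=0$) recovers \eqref{G_S} exactly, confirming consistency of the two expressions. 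The same relation yields $b\,G_S''(u) = \gamma\,G_S'(u) - \tfrac{\gamma}{e-1}\theta\,h'(\theta u) \leq 0$, so $G_S$ is concave, and since $G_S(0)=G_S'(0)=0$, Assumption~\ref{assumption-gs} holds and this $G_S$ is itself feasible for \eqref{eq:G_gamma}.

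The core of the argument is the comparison step. For an arbitrary $\bar{G}_S$ satisfying \eqref{eq:G_gamma} (and Assumption~\ref{assumption-gs}), I would use the elementary bound $G^*(z)\leq bz$ for all $z$ (with equality when $z\leq 0$) to relax \eqref{eq:G_gamma} to the differential inequality
\[
b\,\bar{G}_S'(u) - \gamma\,\bar{G}_S(u) \geq -\tfrac{\gamma}{e-1}\,h(\theta u).
\]
Subtracting the ODE for $G_S$ and setting $\Delta := \bar{G}_S - G_S$ (so $\Delta(0)=0$) gives $b\,\Delta'(u) - \gamma\,\Delta(u) \geq 0$. Multiplying by $\tfrac{1}{b}e^{-\gamma u/b}$ shows $\frac{d}{du}\big[e^{-\gamma u/b}\Delta(u)\big]\geq 0$, so $e^{-\gamma u/b}\Delta(u)$ is nondecreasing; as it vanishes at $u=0$ we conclude $\Delta(u)\geq 0$, i.e. $\bar{G}_S(u)\geq G_S(u)$ for all $u\geq 0$. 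Feeding this back into the two relations gives $b\big(\bar{G}_S'(u) - G_S'(u)\big) \geq \gamma\,\Delta(u) \geq 0$, which is exactly the pointwise derivative comparison $\bar{G}_S'(u)\geq G_S'(u)$ asserted in the proposition.

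Finally, the budget statement is immediate: since $G_S'(u)\leq \bar{G}_S'(u)$ pointwise, we have the inclusion $\{u : \bar{G}_S'(u)\leq -h'(0)\Theta\} \subseteq \{u : G_S'(u)\leq -h'(0)\Theta\}$, and the infimum over the larger set is no larger, giving $\inf\{u: G_S'(u)\leq -h'(0)\Theta\} \leq \inf\{u: \bar{G}_S'(u)\leq -h'(0)\Theta\}$. I expect the only genuine obstacle to be the comparison step: the subtlety is that one must first establish the ordering $\Delta\geq 0$ of the \emph{functions} (via the integrating-factor argument, which relies on both $\Delta(0)=0$ and the correct sign of the differential inequality) and only afterward deduce the ordering of the \emph{derivatives}. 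The ancillary checks---the sign of $G_S'$, concavity, and the $G^*(z)\leq bz$ relaxation---are routine.
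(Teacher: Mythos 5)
Your proof is correct, and it shares the paper's overall skeleton: identify \eqref{G_S} as the choice that saturates \eqref{eq:G_gamma} with equality on the branch where $G^*(G_S'(u)) = b\,G_S'(u)$, then compare an arbitrary feasible $\bar{G}_S$ against this extremal solution. The difference is in how the comparison is executed. The paper invokes Gronwall's inequality as a cited black box to pass directly to $\bar{G}_S' \geq G_S'$, whereas you prove the comparison from first principles: you relax the conjugate term via $G^*(z)\leq bz$ for \emph{all} $z$ (valid since $G^*(z)=\min(bz,0)$), which turns \eqref{eq:G_gamma} into the linear differential inequality $b\bar{G}_S'(u)-\gamma\bar{G}_S(u)\geq -\tfrac{\gamma}{e-1}h(\theta u)$ without needing any sign information on $\bar{G}_S'$; you then apply the integrating factor $e^{-\gamma u/b}$ to $\Delta=\bar{G}_S-G_S$ to obtain the function-level ordering $\Delta\geq 0$, and finally subtract the saturated identity for $G_S$ from the inequality for $\bar{G}_S$ to bootstrap to the derivative ordering. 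Your route buys three things: it is self-contained (in effect a proof of the linear Gronwall lemma in this setting); it handles the piecewise-linear conjugate $G^*$ cleanly for feasible $\bar{G}_S$ whose derivative may change sign, a point the paper's terse citation leaves implicit; and it makes explicit which hypotheses are consumed, namely $\bar{G}_S(0)=0$ from Assumption~\ref{assumption-gs} for the initial condition of the comparison, and $h(0)=0$ in the integration by parts relating your closed form for $G_S$ to \eqref{G_S}. What the paper's route buys is brevity. (A minor stylistic difference: the paper verifies the equality case by differentiating \eqref{eq:G_gamma} to get a first-order ODE for $G_S'$, while you solve the undifferentiated first-order ODE for $G_S$ and then differentiate; the two verifications are equivalent.)
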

\begin{proof}
	Clearly $G_S'(u) \leq 0$ for all $u\geq 0$, and so for any $u\geq 0$, we have that $G^*(G_S'(u)) = bG_S'(u)$.
	We now explain why $G_S'$ satisfies~\eqref{eq:G_gamma} with equality. (This can also be verified by direct substitution.) 
	Because $G_S(0)=0$ (by assumption), it is enough to show 
	that $G_S'$ satisfies the linear, constant coefficient differential equation obtained by equating, and then differentiating, 
	both sides of~\eqref{eq:G_gamma}. After rearranging terms, this differential equation is
	\begin{equation*}
	 G_S''(u) = \frac{\gamma}{b} G_S'(u) - \frac{\gamma}{b(e-1)}\theta h'(\theta u).
	\end{equation*}
	The function $G'_S(u)$ that satisfies this equation for $u\in [0,\infty)$ together with  
	$G'(0)=0$, is given by the convolution of $-\frac{\gamma}{b(e-1)}\theta h'(\theta u)$ with $e^{\gamma u/b}$, as in~\eqref{G_S}.

Since $G_S'$ satisfies~\eqref{eq:G_gamma} with equality, it follows from Gronwall's 
inequality (see \cite{dragomir2003some} Corollary 2) 
that for any other function $\bar{G}_S$ that satisfies \eqref{eq:G_gamma}, we have 
$\bar{G}_{S}'(u) \geq G_S'(u)$ for all $u\geq 0$.
\end{proof}

\begin{remark}
Note that the function $G_S$ is a convolution of $h'(\theta u)$ with an
exponential function.  It can also be viewed as a function derived by applying
Nesterov's smoothing technique to $G$. To show this, we have to argue that
$G_S^* - G^*$ is a concave function. Note that $G^*$ is linear over
$\mathbf{R}_{-}$; therefore, $G_S^* - G^*$ is concave on the positive reals
with $G_S^*(0) - G^*(0) = 0$, and for  $y > 0$, $G_S^*(y) - G^*(y) = 0$. This
establishes the concavity of $G_S^* - G^*$. As we discuss in Example 1 below,
$G_S^* - G^*$ has a closed form when $h$ is a linear function. However, for
more general examples of $h$ we do not have a closed form for $G_S^* - G^*$.
\end{remark}
Now, to justify the fact that we require $\gamma \geq 1$, we find a lower bound $G_S'$ given in \eqref{G_S} in terms of $\gamma$:
	\begin{align*}
		G_S'(u) &= - \frac{\gamma}{b (e-1)} \exp\left(\frac{\gamma}{b} u\right) \int_{0}^{u} \exp\left(-\frac{\gamma}{b} v\right) \theta h'  \left(\theta v\right) \; d v \\
		&\geq -  \frac{\gamma \theta h'\left(0\right)}{b (e-1)}\exp\left(\frac{\gamma}{b} u\right) \int_{0}^{u} \exp\left(-\frac{\gamma}{b } v\right) \; d v =  -\frac{\theta h'\left(0\right) (\exp(\frac{\gamma}{b} u) - 1)}{e-1}
	\end{align*}
The inequality above is exact if $h$ is linear, e.g., $H(U) = \tr(U)$. Now we can bound $b'$ as:
\begin{align*}
b'= \inf\{u: G_S'(u) \leq -h'(0)\Theta\} \geq \frac{b}{\gamma} \log\left(\frac{\Theta}{\theta} (e-1) + 1\right)
\end{align*}
This shows that if $\gamma < 1$, then even in the trivial case where $\theta = \Theta$ 
and $h$ is linear the algorithm can go over budget.

We now compute $G_S'$, and bound the budget $b'$ consumed by the simultaneous algorithm using $G_S'$, for three examples.
 \begin{example}[Linear objective function]
\label{eg:linearupper}
Consider the linear function $h(u) = u$ which
translates to $H(U) =  \tr(U)$. This case allows us to show that our approach specializes 
to recover known results. In this case, the problem reduces to a linear
program with one budget constraint,
\begin{equation*}
 \mbox{maximize} \quad  \sum_{t=1}^{m} \tr(A_t) x_t\quad \mbox{subject to}  \quad 
	\begin{cases}\sum_{t=1}^{m} c_t x_t \leq b&\\
	0 \leq x_t \leq 1 & \textup{for $t = 1,2,\ldots, m$.}
	\end{cases}
\end{equation*}
Computing the integral~\eqref{G_S} gives 
$G_S'(u) = \theta \left(1- \exp\left(\frac{\gamma}{b} u\right)\right)/(e-1)$.
This choice of $G_S'$ corresponds to the exponential update algorithm for online LP~\cite{buchbinder2009online}, 
and, in this particular case, $G_S$ can also be derived as a smooth surrogate for $G$ using Nesterov smoothing with 
a shifted entropy as the proximity function \cite{eghbali2016smoothing}. In this case the bound $b'$ on the budget consumed
is given by
\begin{align*}
b'=\inf\left\{u~|~G_S'(u) <  - h'(0)\Theta\right\}
= \frac{b}{\gamma} \log\left((e-1)\frac{\Theta}{\theta} + 1 \right).
\end{align*}
Choosing 
$\gamma \geq \log\left((e-1)\frac{\Theta}{\theta} + 1\right)$
ensures that the budget is not violated.
\end{example}
\begin{example}[D-optimal experiment design]  Suppose that 
\label{eg:dupper}
$h(u) = \log(u+1)$, the objective function of interest in D-optimal experiment design. 
In this case $h'(u) = (1+u)^{-1}$, and we note that it is possible
to express the optimal $G_S'$ from~\eqref{G_S} in terms of special functions called \emph{exponential integrals}. 
To obtain an upper bound on $b'$, and hence on the budget consumed, we use the fact that 
$h'(\theta v) = (1+\theta v)^{-1} \geq e^{-\theta v}$ whenever $\theta v\geq 0$. 
Using this inequality in~\eqref{G_S}, and computing the resulting (elementary) integral, we obtain the bound
\[ G_S'(u) \leq \frac{1}{(e-1)}\frac{e^{-\theta u} - e^{\frac{\gamma}{b}u}}{\theta^{-1} + (b/\gamma)}
	\leq \frac{1}{(e-1)}\frac{1-e^{\frac{\gamma}{b}u}}{\theta^{-1} + (b/\gamma)}.\]
Rearranging and using the definition of $b'$ we see that 
\begin{equation}	
\label{eq:bprimeDbound}
	b' \leq \frac{b}{\gamma}\log\left[(e-1)\Theta\left(\theta^{-1}+  b/\gamma\right) + 1\right].
\end{equation}
Since $\gamma\geq 1$, we can ensure that there is no budget violation, i.e.,  $b'\leq b$, by choosing
$\gamma \geq \log\left[(e-1)\Theta \left(\theta^{-1}+b\right) + 1\right]$.
Figure \ref{nwa-1} shows examples of $G_S$ for two values of $\gamma$ when $h(u) = \log(1+u)$. 
\end{example}

 \begin{figure}[h]  
  \begin{subfigure}{0.47\textwidth}
\centering    
                   \includegraphics[width=0.95\textwidth]{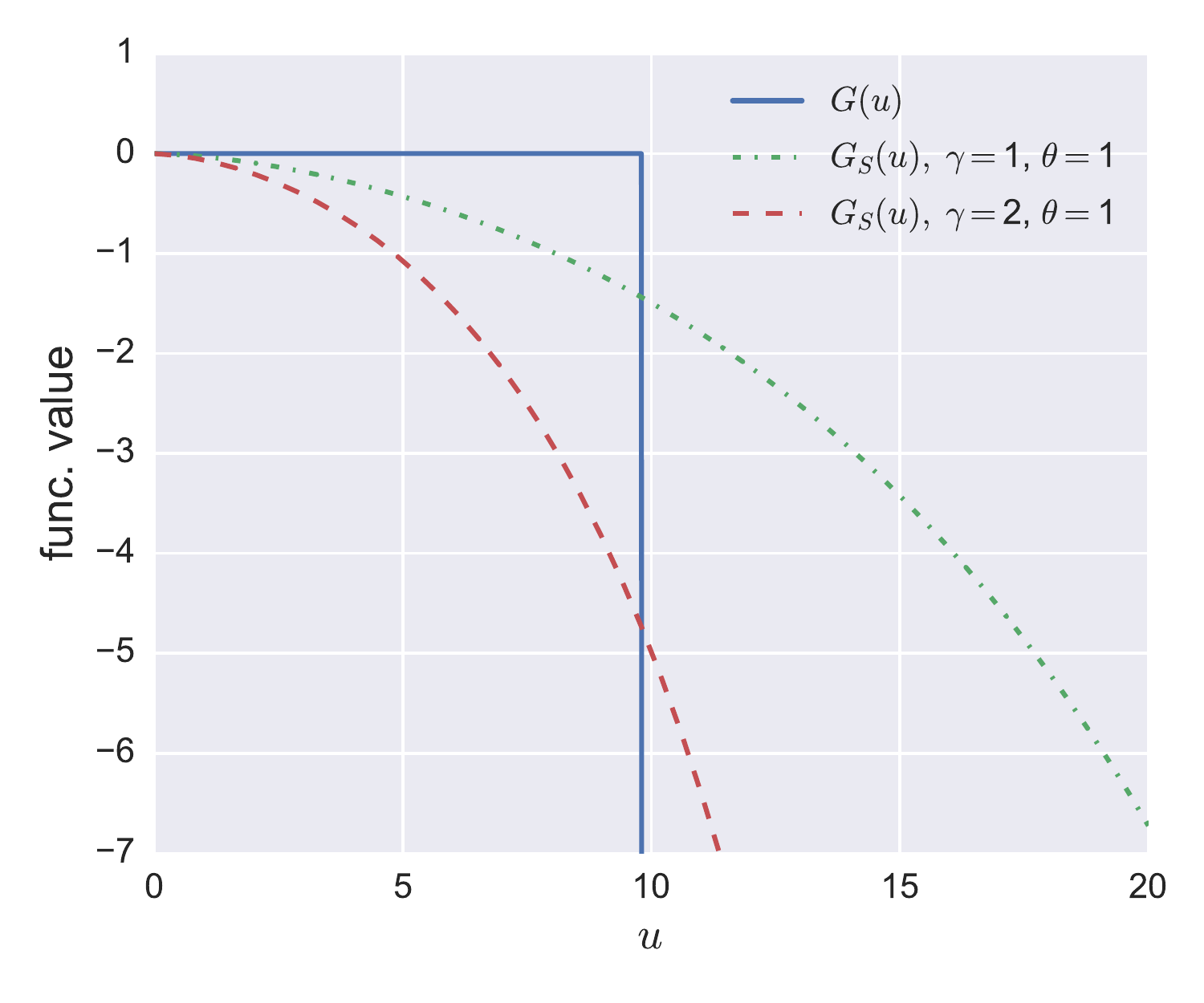}        
          
                     \caption{Examples of $G_S$ when $h(u)= \log(u+1)$}
                     \label{nwa-1}
 \end{subfigure}
\begin{subfigure}{0.47\textwidth}
\centering    
                   \includegraphics[width=0.95\textwidth]{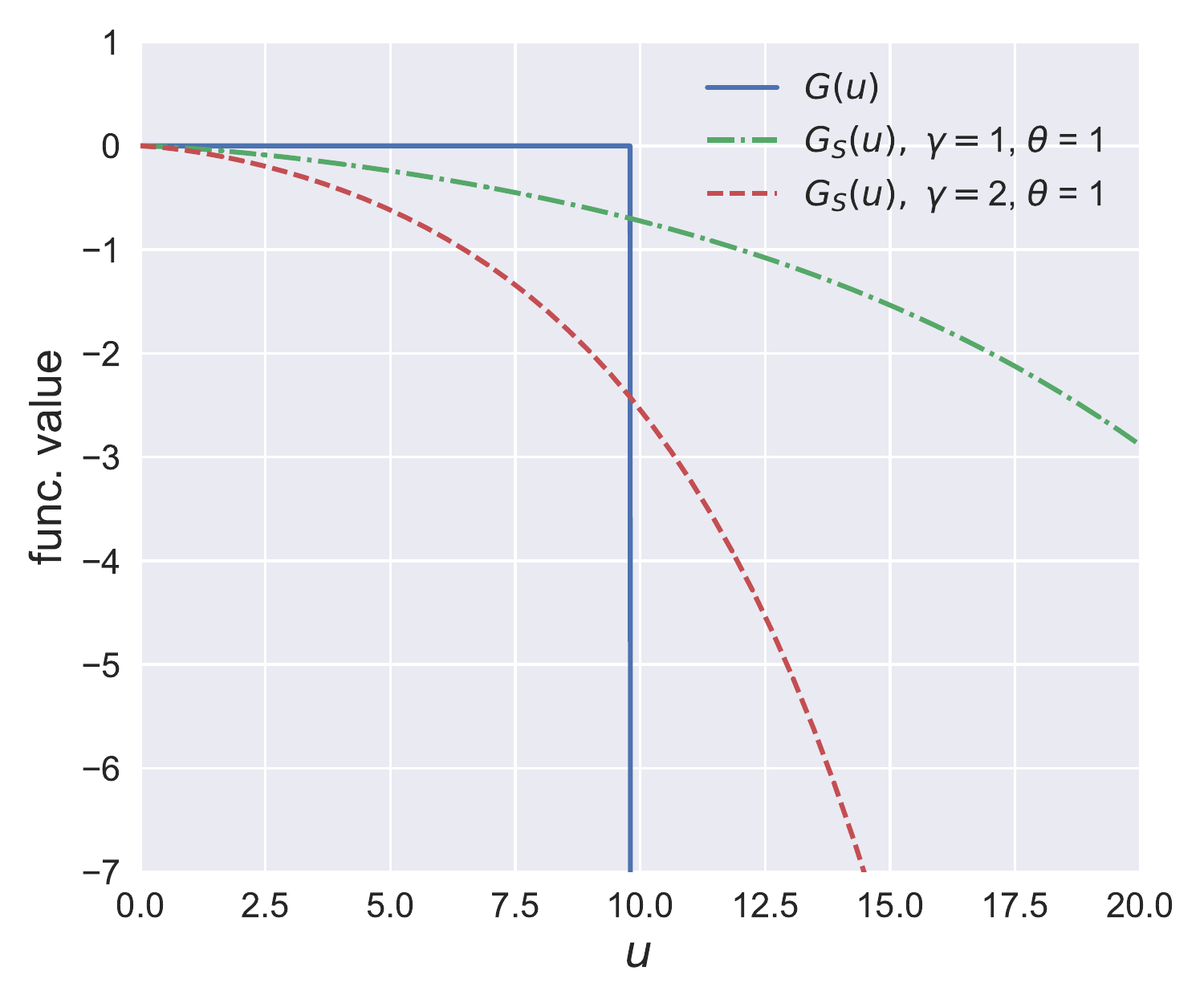}        
           
			\caption{Example of $G_S$ when $h(u) = 1-\frac{1}{u+1}$}
			\label{nwa-2}	
 \end{subfigure}
 \caption{Examples of $G_S$ given in \eqref{G_S} for different values of $\gamma$ and $\theta$.} 

      \end{figure}

\begin{example}[A-optimal experiment design] 
\label{eg:aupper}
When $h(u) = 1 - \frac{1}{1+u}$, and so $h'(u) = (1+u)^{-2}$, it
is again possible to express the optimal $G_S'$ in terms of exponential integrals. Using the same strategy as example~\ref{eg:dupper}, 
but with the bound $h'(\theta v) = (1+\theta v)^{-2} \geq e^{-2\theta v}$ (for $v \geq 0$), gives
\[ G_S'(u)  
	\leq \frac{1}{(e-1)}\frac{1-e^{\frac{\gamma}{b}u}}{\theta^{-1} + (2b/\gamma)}\;\;\textup{and}\;\;
b' \leq \frac{b}{\gamma}\log\left[(e-1)\Theta(\theta^{-1} + 2b/\gamma)+1\right].\]
Figure~\ref{nwa-2} shows examples of $G_S$ for two values of $\gamma$ when $h(u) = 1-\frac{1}{1+u}$. 
\end{example}

\subsection{Smoothing $h$.} 
\label{sec:smooth-h}
We are now in a position to design $h_S$, a smoothed surrogate for $h$.  Recall
that, for a fixed choice of $\gamma\geq 1$, we can use~\eqref{G_S} to design an
optimal $G_S'$. This gives a bound $b'$ on the budget consumed by
Algorithm~\ref{algorithm2}.  We design $h_S$ by solving the following
optimization problem over $\beta$ and the function $h_S$:
\begin{equation}
	\label{eq:smooth-h1}
	 \textup{minimize}\; \beta\;\;\textup{subject to}\; \begin{cases}
	\gamma h_S(u) \leq h^*(h_S'(u)) + \beta h(u) & \forall u\in [0,u_{\max}]\\
	h_S\;\;\textup{satisfies PSD-DR.}\end{cases}
\end{equation}
Note that this problem comes directly from Lemma~\ref{comp-ratio-pre} and the fact that for fixed $\gamma$, we can maximize 
the bound on the competitive ratio, $(\gamma/(e-1) + \beta)^{-1}$, by minimizing $\beta$. 

The key challenge in solving this optimization problem is imposing the PSD-DR
property.  A crucial observation in this paper is that, for a differentiable
trace function, satisfying PSD-DR can be expressed in an equivalent, more
explicit form.  This is an easy consequence of \emph{L\"owner's theorem} for
matrix monotone functions~\cite{Hansen2013}, an important result in matrix
analysis.

\begin{proposition}
\label{prop:lowner}
Suppose $H_S(U) = \sum_{i=1}^{n}h_S(\lambda_i(U))$ where $h_S$ is a monotone increasing, concave function, and $h_S(0) = 0$. 
Then $H_S$ is PSD-DR for all $n$, if and only if there exists a positive measure $\mu$ supported on $[0,1]$ such that 
\begin{gather}\label{eq:lowner}
	h_S(u) = \int_{0}^{u} y(u') \; d u' \quad\textup{where}\quad
	y(u) = \int_{0}^{1} \frac{1}{u \lambda + (1-\lambda)} \; d \mu(\lambda).
\end{gather}
\end{proposition}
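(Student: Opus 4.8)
The plan is to reduce the statement to L\"owner's theorem on operator monotone functions by means of the functional calculus. The first step is to record that for a differentiable trace function the gradient acts spectrally: if $U = \sum_i \lambda_i v_i v_i^T$ is a spectral decomposition then $\nabla H_S(U) = \sum_i h_S'(\lambda_i) v_i v_i^T$, i.e.\ $\nabla H_S(U) = y(U)$ with $y := h_S'$ interpreted through the functional calculus (a standard fact for spectral functions, using that $h_S$ is differentiable). Consequently PSD-DR---that $\nabla H_S(U) \preceq \nabla H_S(U')$ whenever $U \succeq U'$---is \emph{exactly} the statement that $y = h_S'$ induces an \emph{order-reversing} matrix map on $S_+^n$, so that requiring PSD-DR for all $n$ is precisely requiring $-h_S'$ to be \emph{operator monotone} in the sense of L\"owner. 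This observation is what makes L\"owner's theorem applicable, and it is also why the hypothesis ``for all $n$'' cannot be weakened: monotonicity of the induced map in a single fixed dimension is strictly weaker.

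For the sufficiency direction ($\Leftarrow$) I would argue directly, without the full strength of L\"owner's theorem. For each fixed $\lambda \in [0,1]$ the elementary function $u \mapsto (u\lambda + (1-\lambda))^{-1}$ is operator antitone on $[0,\infty)$: writing $s = (1-\lambda)/\lambda$ it is a positive multiple of $u \mapsto (u+s)^{-1}$, whose order-reversing property follows in one line from $U \succeq U' \Rightarrow U + sI \succeq U' + sI \succ 0 \Rightarrow (U+sI)^{-1} \preceq (U'+sI)^{-1}$. Since $y$ is a positive mixture (an integral against $\mu \geq 0$) of such functions, and the order-reversing inequality $y(U) \preceq y(U')$ is preserved under nonnegative combinations and under the limits defining the integral, $y = h_S'$ is operator antitone and hence $H_S$ is PSD-DR for every $n$. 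Differentiating the displayed formula confirms $h_S' = y$, and $h_S(0) = 0$ holds by construction.

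For the necessity direction ($\Rightarrow$), which I expect to be the crux, I would apply L\"owner's integral representation to the operator monotone function $f := -h_S'$ on $(0,\infty)$. In the form coming from the Pick/Nevanlinna representation, whose representing measure is supported on the complement $(-\infty,0]$, this yields $f(u) = \alpha + \beta u - \int_{[0,\infty)} (u+s)^{-1}\,d\nu(s)$ with $\beta \geq 0$ and $\nu \geq 0$, so that $h_S'(u) = -\alpha - \beta u + \int_{[0,\infty)} (u+s)^{-1}\,d\nu(s)$. The structural constraints then pin down the parameters: since $h_S$ is increasing we have $h_S' \geq 0$, and since $h_S$ is concave $h_S'$ is nonincreasing, hence bounded on $[0,\infty)$; letting $u \to \infty$ forces $\beta = 0$ and $-\alpha = \lim_{u\to\infty} h_S'(u) \geq 0$. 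Finally I would reparametrize $s \in [0,\infty]$ to $\lambda \in [0,1]$ via $\lambda = (1+s)^{-1}$, under which $(u+s)^{-1} = \lambda\,(u\lambda + (1-\lambda))^{-1}$; absorbing the Jacobian factor $\lambda$ into the measure, the bulk term becomes $\int (u\lambda+(1-\lambda))^{-1}\,d\mu(\lambda)$, the constant $-\alpha \geq 0$ becomes the mass at $\lambda = 0$ (integrand $1$) and the $1/u$ contribution the mass at $\lambda = 1$. This produces a single positive measure $\mu$ on $[0,1]$ with $y(u) = h_S'(u) = \int_0^1 (u\lambda + (1-\lambda))^{-1}\,d\mu(\lambda)$, and integrating from $0$ to $u$ with $h_S(0) = 0$ gives the claimed formula.

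The main obstacle is the careful bookkeeping in this last direction: selecting the correct normalization of L\"owner's representation, justifying from the nonnegativity and boundedness of $h_S'$ that the affine term vanishes and that the constant has the right sign, and---most delicately---handling the endpoints $\lambda = 0$ and $\lambda = 1$ (equivalently $s = \infty$ and $s = 0$) together with the integrability of the representing measure there, so that $\mu$ is genuinely a positive measure on the \emph{closed} interval $[0,1]$. One should also confirm that the domain subtlety (applying PSD-DR on $S_+^n$, whose spectra include $0$, whereas L\"owner's theorem is naturally stated on the open interval) causes no trouble, since the representation extends continuously to $[0,\infty)$ whenever $\mu$ is supported on $[0,1]$.
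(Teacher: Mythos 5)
Your proof is correct and rests on the same pillar as the paper's: L\"owner theory, entered through the spectral identity $\nabla H_S(U) = h_S'(U)$, which converts PSD-DR for all $n$ into operator antitonicity of $h_S'$ on the half-line. The difference is what gets proved versus cited. The paper's proof is a one-line citation of Theorem 4.9 of \cite{Hansen2013}, which is precisely the packaged statement that the nonnegative operator monotone decreasing functions on $(0,\infty)$ are exactly the integrals appearing in \eqref{eq:lowner}; the spectral reduction and the endpoint bookkeeping are absorbed into that citation. You instead re-derive the characterization: sufficiency by the elementary resolvent argument (antitonicity of $u \mapsto (u+s)^{-1}$, preserved under positive mixtures and limits), and necessity from the classical Nevanlinna--Pick form of L\"owner's theorem, using nonnegativity and boundedness of $h_S'$ (finite at $0$, nonincreasing by concavity) to kill the linear term and to justify dropping the compensating term $s/(1+s^2)$, i.e.\ to obtain $\int (1+s)^{-1}\,d\nu(s) < \infty$, before the substitution $\lambda = (1+s)^{-1}$ with the Jacobian absorbed into $\mu$. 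Two remarks. First, in the necessity step the order of operations matters: start from the representation \emph{with} the compensating term, since the simplified form $\alpha + \beta u - \int (u+s)^{-1}\,d\nu(s)$ presupposes exactly the integrability that boundedness of $h_S'$ is needed to establish; your sketch effectively does this, but it should be stated in that order. Second, your route buys a self-contained argument (modulo the classical representation theorem) that makes explicit what the citation hides: why ``for all $n$'' is essential, and how the endpoints encode $\lim_{u\to\infty} h_S'(u)$ (mass at $\lambda = 0$) and a $1/u$ singularity (mass at $\lambda = 1$, excluded when $h_S'(0) < \infty$); what the citation buys is brevity.
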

\begin{proof}
	This follows from~\cite[Theorem 4.9]{Hansen2013}. 
\end{proof}

This alternative description allows us to impose the constraint that $h_S$
satisfies PSD-DR  in the optimization problem for designing $h_S$. We now
rewrite this optimization problem in a more computationally useful form.  We
use the change of variable 
$h_S\left(u\right) =  \int_{0}^{u} y\left(s\right)\; d s$, introduce 
$\mu$ from~\eqref{eq:lowner} as a decision variable, and express $y$ in terms 
of $\mu$. Doing so we obtain the following optimization problem. 
 \begin{align}\label{cont-problem}
  \mbox{minimize}_{\beta,y,\mu}&{ \quad  \beta}\\ \notag
 \mbox{subject to}& \quad  \gamma \int_{0}^{u} y\left(s\right) ds  - h^*\left(y\left(u\right)\right) \leq \beta  h\left(u\right) \qquad \forall u \in [0,u_{\max}]\\ \notag 
 & y\left(t\right) = \int_{0}^{1} \frac{1}{t \lambda + \left(1-\lambda\right)} \; d \mu\left(\lambda\right)\\
 &\quad \textup{$\mu$ a positive measure supported on $[0,1]$.} \notag
 \end{align}
We denote by $\beta(\gamma)$, the optimal value of~\eqref{cont-problem} for a
given $\gamma$. We extend $h_S$ linearly on $\mathbf{R}_{-}$ to satisfy the
technical assumption on the domain of $h_S$ in Assumption
\eqref{assumption-hs}.

 \begin{figure}
  \begin{subfigure}{0.46\textwidth}
\centering    
                \includegraphics[width=0.85\textwidth]{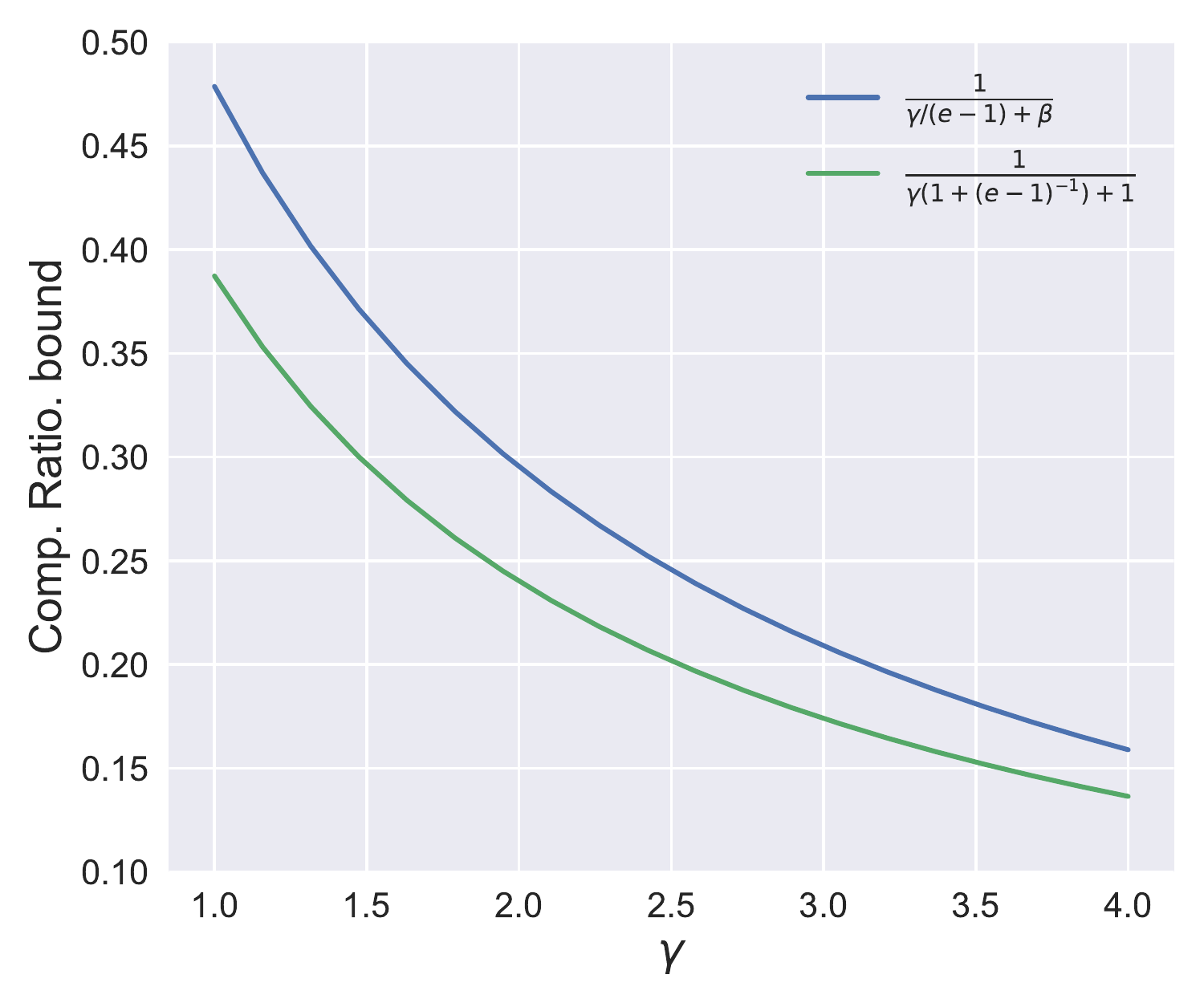}        
            \caption{}
              \label{comrat_gamma_log}     
 \end{subfigure}
  \begin{subfigure}{0.46\textwidth}
\centering    
                \includegraphics[width=0.85\textwidth]{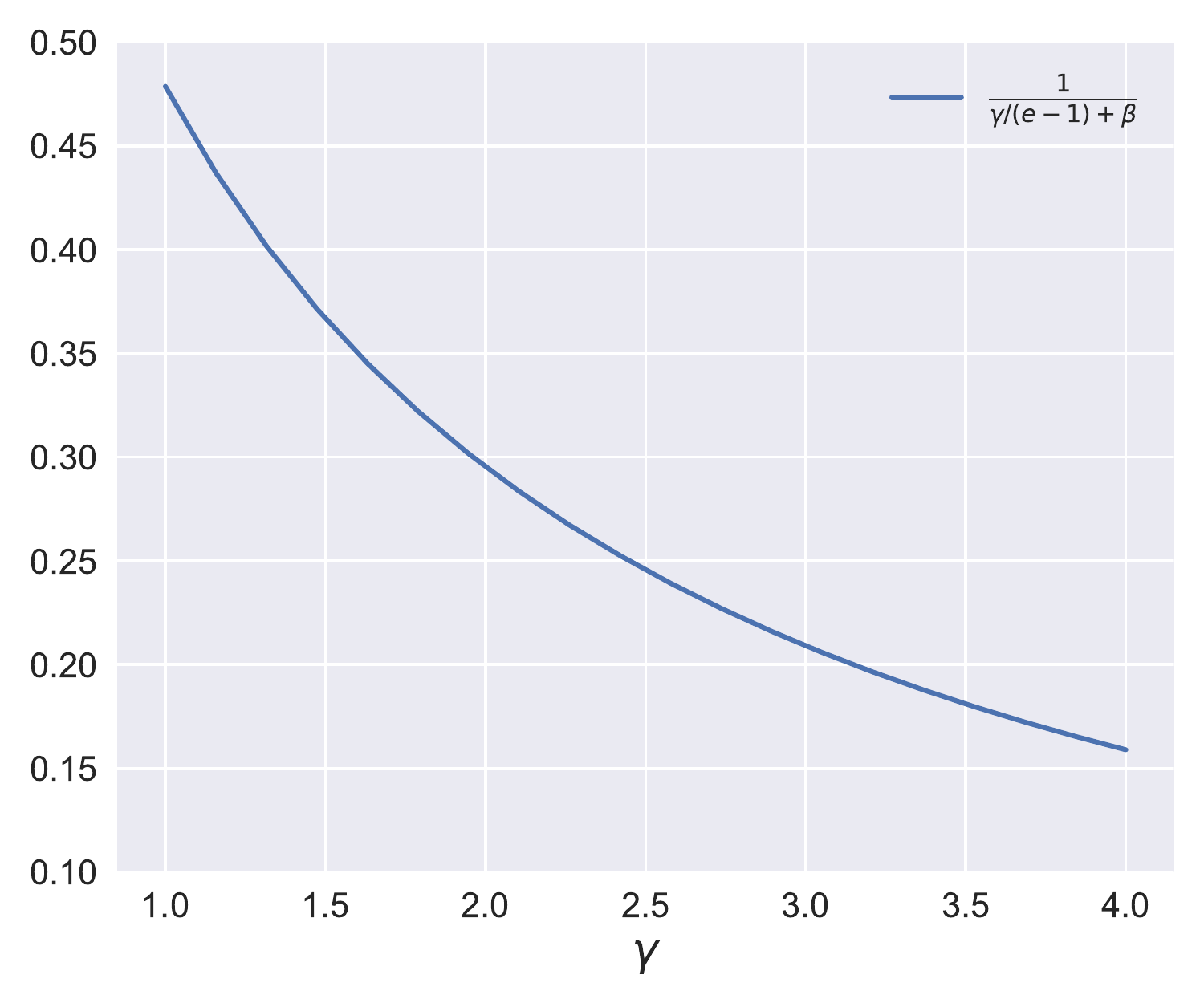}        
            \caption{}
              \label{comrat_gamma_inv}     
 \end{subfigure}
\caption{(a) Two competitive ratio bounds vs $\gamma$ for $h(u) = \log(u+1)$.
The first bound is achieved by numerically finding the optimum $\beta(\lambda)$
in \eqref{eq:smooth-h1}, while the second bound uses the fact that
$\beta(\lambda) \leq \lambda + 1$ when $h(u) = \log(u+1)$. (b) Competitive
ratio bound achieved by numerically finding the optimum $\beta(\lambda)$  for
$h(u) = 1 - 1/(u+1).$}
\end{figure}

\begin{thm}\label{comp-ratio}
	Suppose that $G_S$ is defined as in~\eqref{G_S}, 
	and $H_S$ is the trace function corresponding to $\int_{0}^uy(s)\;ds$, 
 	where $y$ is optimal for~\eqref{cont-problem}. Then the 
	iterates $\tilde{x}_1,\ldots,\tilde{x}_m$ of Algorithm~\ref{algorithm2} satisfy
 	\begin{align*}
 		&\textstyle{H\left( \sum_{t=1}^{m} A_t \tilde{x}_t \right) \geq  \frac{1}{\gamma/ (e-1) + \beta(\gamma)} D^\star}
\quad\textup{and}\quad
 	\sum_{t=1}^{m} c_t\tilde{x}_t \leq G_S'^{-1}(-h'(0) \Theta)
 	\end{align*}
	where $\Theta\geq c_t^{-1}\tr(A_t)$ for all $t$
and $\beta(\gamma)$ is the optimal value of $\beta$ in~\eqref{cont-problem}. 
Moreover, if $H$ satisfies the PSD-DR property, then $\beta(\gamma) \leq \gamma + 1$. 
\end{thm}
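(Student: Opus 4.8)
The plan is to derive the first two displayed bounds as a direct consequence of Lemma~\ref{comp-ratio-pre}, after checking that its hypotheses are met by the $G_S$ and $H_S$ specified in the theorem, and then to prove the final inequality $\beta(\gamma)\le\gamma+1$ by exhibiting an explicit feasible point for the design problem~\eqref{cont-problem}.

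First I would verify the hypotheses of Lemma~\ref{comp-ratio-pre}. By the proposition establishing~\eqref{G_S}, the function $G_S$ satisfies~\eqref{eq:G_gamma} (indeed with equality). Writing $h_S(u)=\int_0^u y(s)\,ds$ so that $h_S'=y$, the first constraint of~\eqref{cont-problem} is exactly~\eqref{eq:h_beta} with $\beta=\beta(\gamma)$, while the measure constraint of~\eqref{cont-problem} together with Proposition~\ref{prop:lowner} guarantees that the associated trace function $H_S$ is PSD-DR. Hence Lemma~\ref{comp-ratio-pre} applies: its part~1 yields the competitive-ratio bound~\eqref{eq:crbound} with $\beta=\beta(\gamma)$, which is the first displayed inequality, and its part~2 yields the budget bound $\sum_t c_t\tilde x_t\le b'$ from~\eqref{eq:bprime}. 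To identify $b'$ with $G_S'^{-1}(-h'(0)\Theta)$, I would note that $G_S'$ in~\eqref{G_S} is continuous and strictly decreasing on $[0,\infty)$, since its integrand $\theta h'(\theta v)$ is positive; consequently the infimum defining $b'$ is attained at the unique point where $G_S'(u)=-h'(0)\Theta$, i.e.\ $b'=G_S'^{-1}(-h'(0)\Theta)$.

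For the final claim, since $\beta(\gamma)$ is the \emph{minimal} value of~\eqref{cont-problem}, it suffices to produce a single feasible triple $(\beta,y,\mu)$ with $\beta=\gamma+1$. The natural candidate is the unsmoothed objective itself: take $h_S=h$, i.e.\ $y=h'$. Because $H$ satisfies PSD-DR and $h$ is monotone increasing, concave, with $h(0)=0$, Proposition~\ref{prop:lowner} supplies a positive measure $\mu$ on $[0,1]$ for which $h'$ admits the representation~\eqref{eq:lowner}; thus this choice of $y$ satisfies the measure constraint of~\eqref{cont-problem}. It then remains to check the first constraint. Using the Fenchel identity $h^*(h'(u))=u\,h'(u)-h(u)$, valid for differentiable concave $h$, the left-hand side $\gamma\int_0^u h'(s)\,ds-h^*(h'(u))$ simplifies to $(\gamma+1)h(u)-u\,h'(u)$, so with $\beta=\gamma+1$ the constraint reduces to $-u\,h'(u)\le 0$. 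This holds for every $u\in[0,u_{\max}]$ because $h'\ge 0$ and $u\ge 0$, establishing feasibility and therefore $\beta(\gamma)\le\gamma+1$.

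Most of the argument is bookkeeping: matching the constraints of~\eqref{cont-problem} to~\eqref{eq:h_beta} and invoking Lemma~\ref{comp-ratio-pre}. The one genuinely substantive step is the feasibility argument for the last bound, where the crux is that PSD-DR of $H$ is precisely what licenses using $h$ itself as a feasible surrogate via Proposition~\ref{prop:lowner}; the accompanying conjugate computation is then routine. A minor point to handle with care is the well-definedness of $G_S'^{-1}$, which rests on the strict monotonicity of $G_S'$ observed above.
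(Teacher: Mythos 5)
Your proposal is correct and follows essentially the same route as the paper: the first two bounds are obtained by instantiating Lemma~\ref{comp-ratio-pre} with the designed $G_S$ and the optimal $y$ from~\eqref{cont-problem} (whose measure representation gives PSD-DR via Proposition~\ref{prop:lowner}), and the bound $\beta(\gamma)\le\gamma+1$ is proved by exhibiting $y=h'$, $\beta=\gamma+1$ as feasible for~\eqref{cont-problem}. Your write-up simply fills in details the paper leaves implicit, namely the Fenchel identity $h^*(h'(u))=uh'(u)-h(u)$ behind the feasibility check and the strict monotonicity of $G_S'$ that identifies $b'$ with $G_S'^{-1}(-h'(0)\Theta)$.
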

\begin{proof}
The proofs of the first two inequalities follow directly from Lemma
\ref{comp-ratio-pre}.  To see that $\beta(\gamma) \leq \gamma + 1$ when $H$
satisfies the PSD-DR property, one can note that $y(u) = h'(u)$ and $\beta =
\gamma + 1$ is a feasible solution for \eqref{cont-problem} in that case.
\end{proof}

Note that if $H_S$ satisfies PSD-DR, then $H$ is a feasible solution to problem
\eqref{cont-problem}; however, it is not necessarily the optimal solution. As
we will see in the next section, the objective function in D-optimal experiment
design, $H(U) = \log\det(I+U)$, provides such and example, where the
competitive ratio improves as the result of smoothing even though $H$ is
PSD-DR. In the next section, we also consider the A-optimal experiment design
where the objective function does not satisfy PSD-DR.  

 \subsubsection{Examples}
 \begin{example}[Linear objective] When $h(u) = u$ then 
\[ h^*(y) =\begin{cases}  0 & \textup{if $y=1$}\\ -\infty & \textup{otherwise.}\end{cases}\]
Therefore, the only feasible solution to~\eqref{eq:smooth-h1} is $h_S(u) = u$.
This is a PSD-DR function.  The corresponding value of $\beta$ is $\beta =
\gamma$. As such, the competitive ratio achieved by Algorithm~\ref{algorithm2}
is at least $\frac{1}{\gamma}\left(1-\frac{1}{e}\right)$. 
\end{example}
 
 \begin{example}[D-optimal experiment design]
In this case $h\left(u\right) = \log\left(u+1\right)$, i.e., $H(U) =
\log\det(I+U)$. Note that $H$ satisfies the PSD-DR property, because we can
write $h'(u) = 1/(1+u)$ in the form of~\eqref{eq:lowner} by choosing the
measure $\mu(\lambda)$ to have mass $1/2$ at $\lambda = 1/2$. By putting
$h_S(u) = h(u)$, we can conclude that $\beta(\gamma) \leq 1+\gamma$.  This
means that the competitive ratio achieved by Algorithm~\ref{algorithm2} is at
least $\frac{1}{\gamma(1-1/e)^{-1} + 1}$. 

By solving~\eqref{cont-problem} computationally, we can design an $h_S$ that
achieves a better competitive ratio.    Figure~\ref{log_cont_problem_11} shows
the solution of \eqref{cont-problem} for $\gamma = 1$ over the finite horizon
$[0, u_{\max}] = [0,1]$.  Figures~\ref{log_cont_problem_41}
and~\ref{log_cont_problem_410} show the solution of~\eqref{cont-problem} for
$\gamma = 4$ and $u_{\max}= 1$ and $u_{\max} = 10$ respectively. We note that
the optimal measures are quite complicated---in the case $\gamma=1$ the optimal
measure seems to be atomic, whereas in the case $\gamma = 4$, it has a
qualitatively different structure.  In Figure \ref{comrat_gamma_log}, we plot
the competitive ratio bound of Theorem \ref{comp-ratio} vs $\gamma$, and
compare it with the bound obtained without smoothing $h$.  We can see that
$\gamma$ captures the trade off between the budget violation and competitive
ratio.  A smaller $\gamma$ gives a better competitive ratio at the expense of
larger budget violation, quantified by the bound given in
Example~\ref{eg:dupper}.
\end{example}

 \begin{example}[A-optimal experiment design.]
In this case $H$ is \emph{not} a PSD-DR function, so to obtain competitive
ratio bounds by our method we must construct a PSD-DR surrogate $H_S$
numerically.  Figure \ref{comrat_gamma_inv}, shows the competitive ratio bound
given by solving \eqref{cont-problem} for $h(u) = 1 - 1/(u+1)$ for different
values of the parameter $\gamma$. In Figure \ref{inv_cont_examples}, we have
provided examples of $h_S$ for different values of $\gamma$ and $u_{\max}$.
\end{example}

 \begin{figure}[h]  
   \begin{subfigure}{1\textwidth}
\centering    
                   \includegraphics[width=0.75\textwidth]{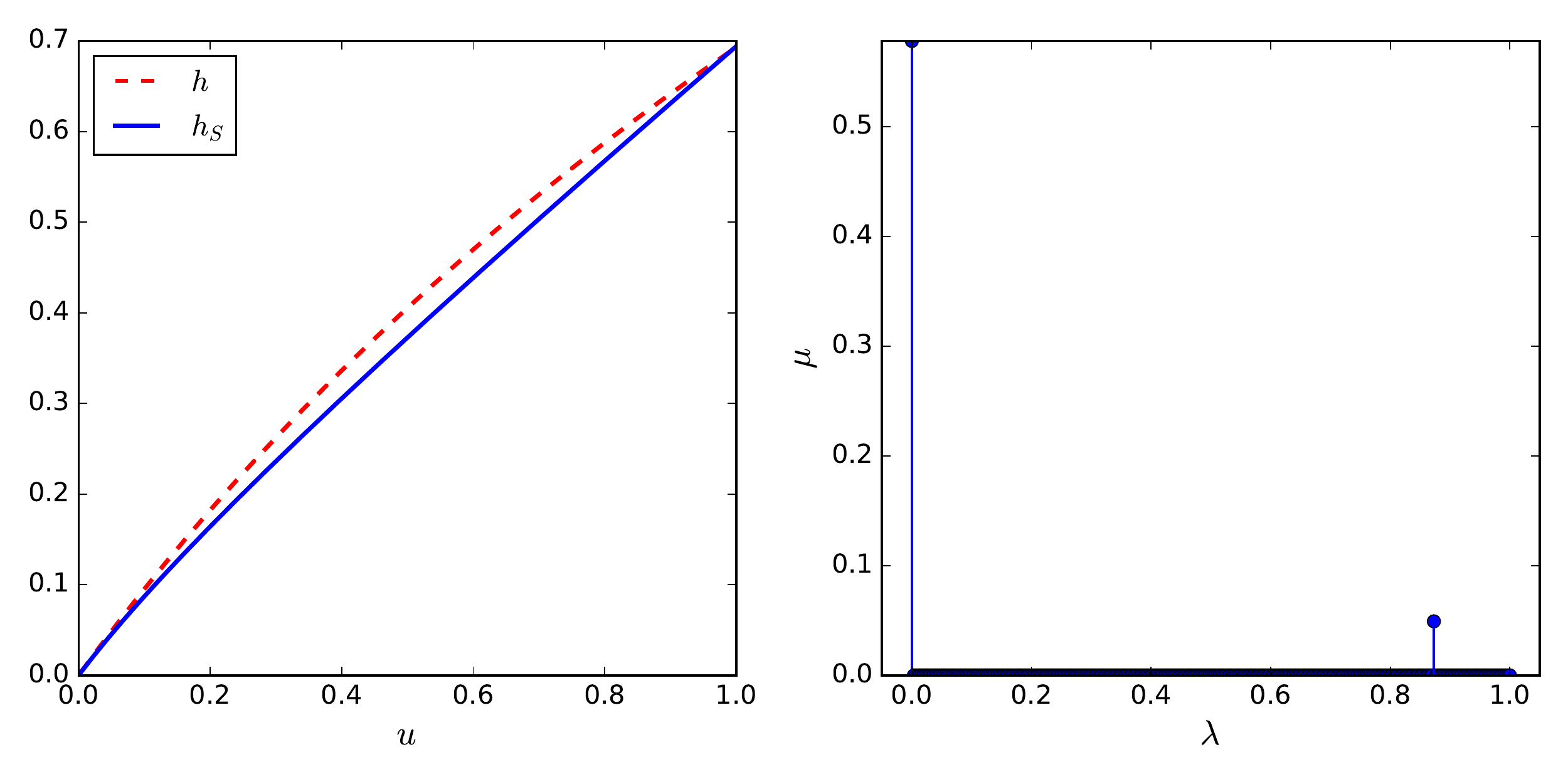}         
                      \caption{}
\label{log_cont_problem_11}
 \end{subfigure}
    \begin{subfigure}{1\textwidth}
\centering    
                   \includegraphics[width=0.75\textwidth]{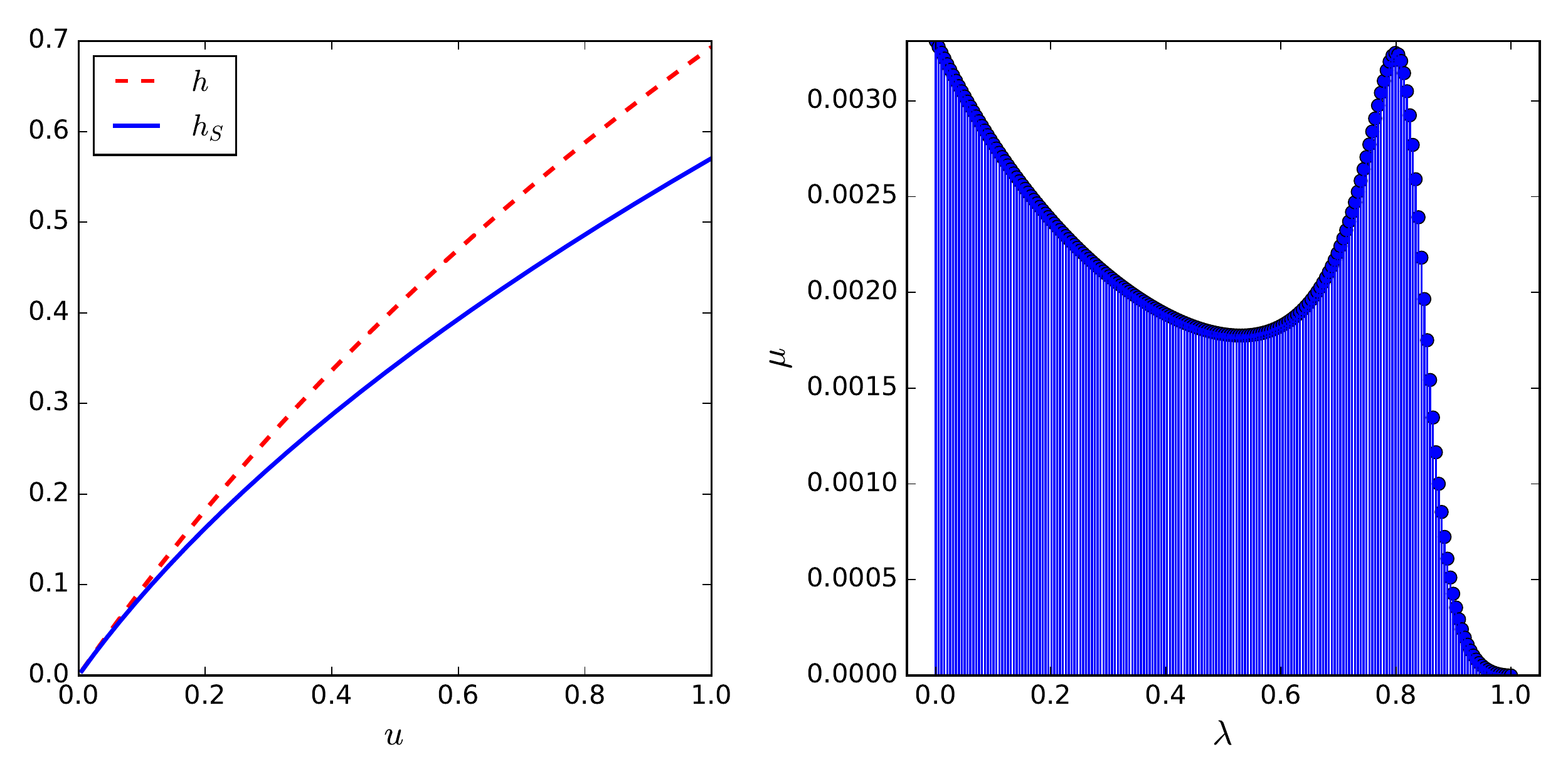}         
                      \caption{}
\label{log_cont_problem_41}
 \end{subfigure}
   \begin{subfigure}{1\textwidth}
\centering    
                   \includegraphics[width=0.75\textwidth]{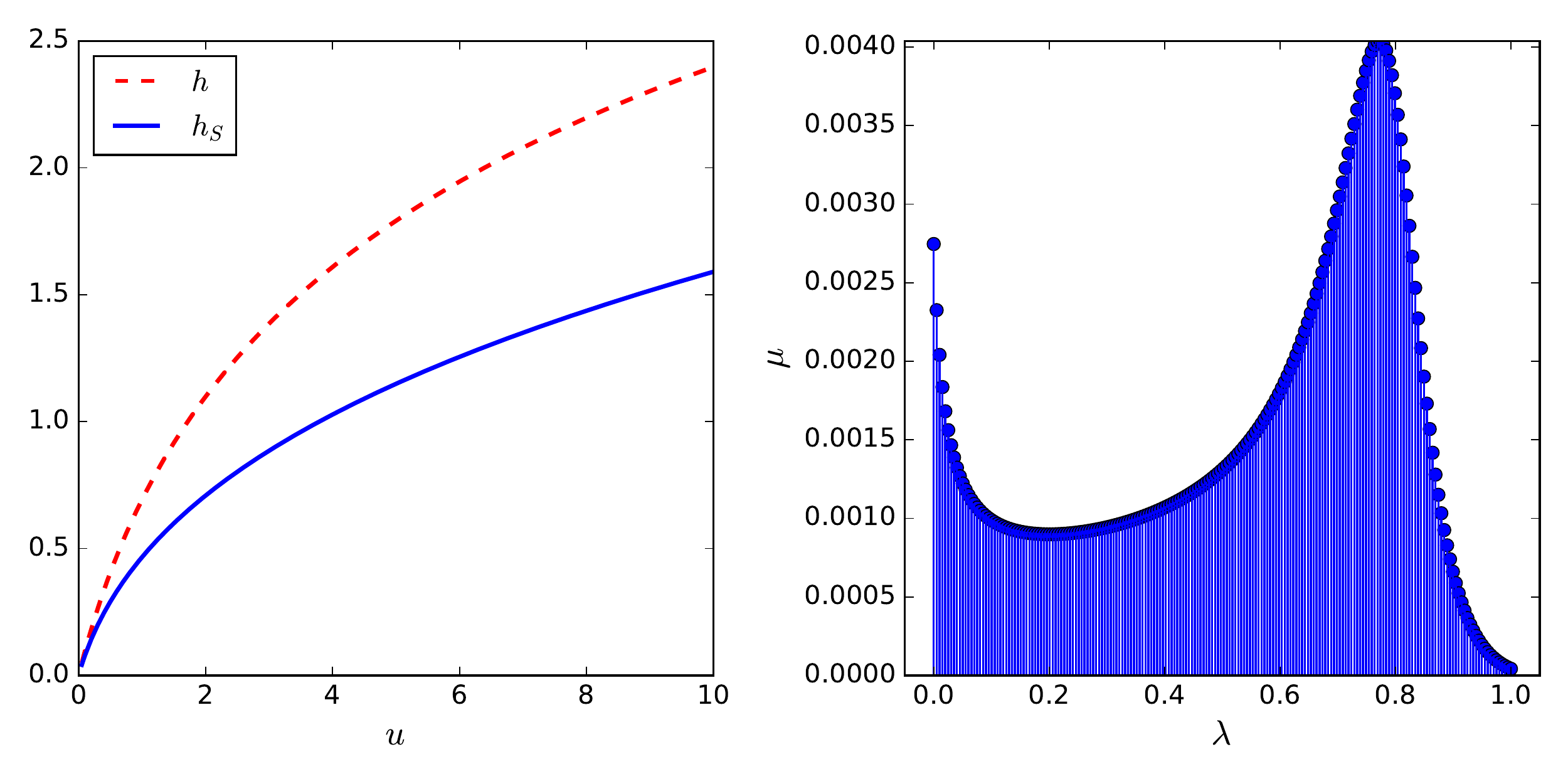}         
                      \caption{}
\label{log_cont_problem_410}
 \end{subfigure}
 \caption{The smoothed function $h_S$ and the corresponding measure $\mu$, when
$h(u) = \log(u+1)$. The smooth $h_S$ is found by solving the convex problem
\eqref{cont-problem} via discretization of measure $\mu$. In (a), $\gamma = 1$
and $u_{\max} = 1$. In (b), $\gamma = 4$ and $u_{\max} = 1$. In (c), $\gamma =
4$ and $u_{\max} = 10$.}
 \end{figure}

 \begin{figure}[h]  
   \begin{subfigure}{1\textwidth}
\centering    
                   \includegraphics[width=0.75\textwidth]{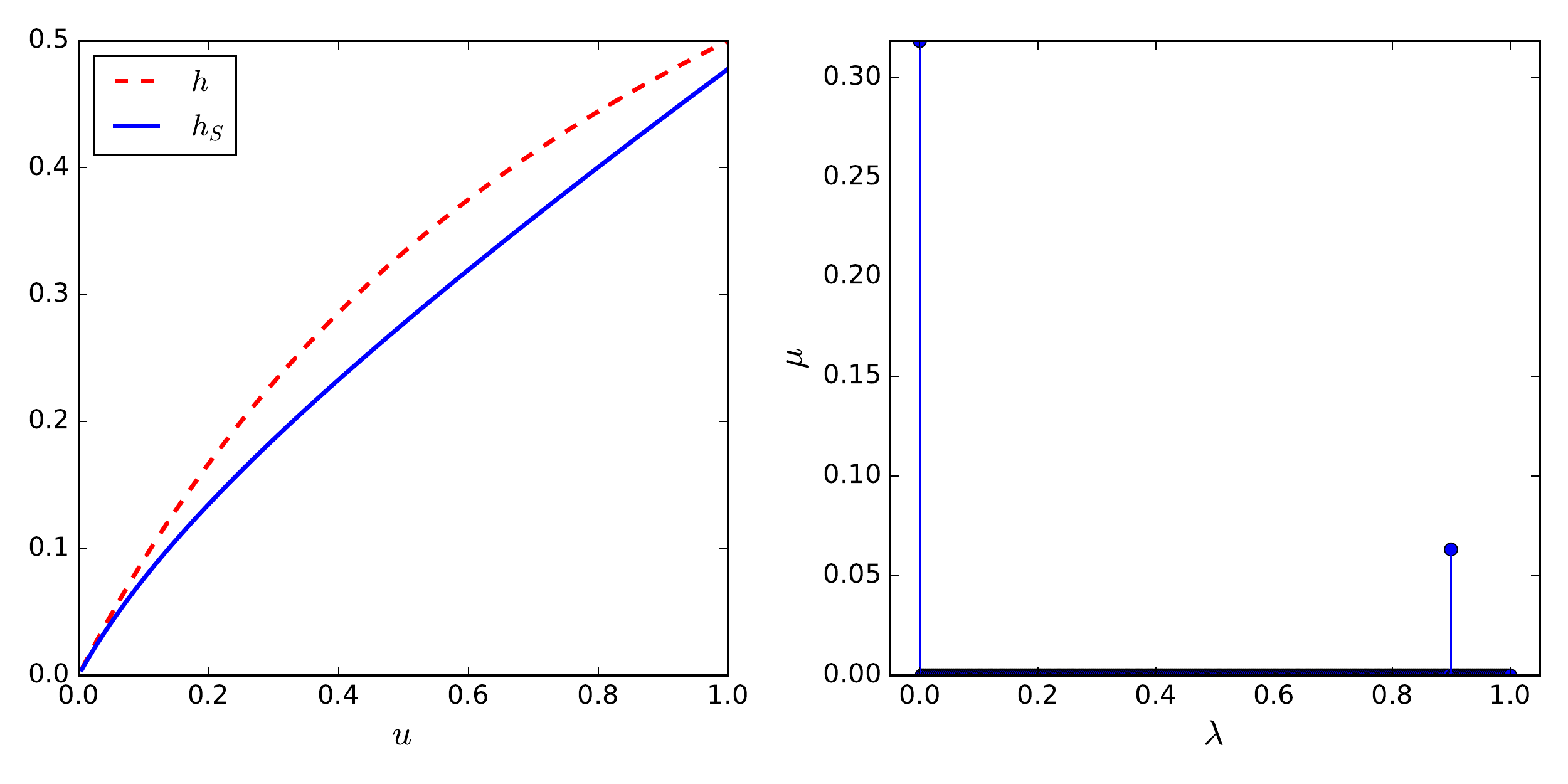}         
                      \caption{}
\label{inv_cont_problem_11}
 \end{subfigure}
    \begin{subfigure}{1\textwidth}
\centering    
                   \includegraphics[width=0.75\textwidth]{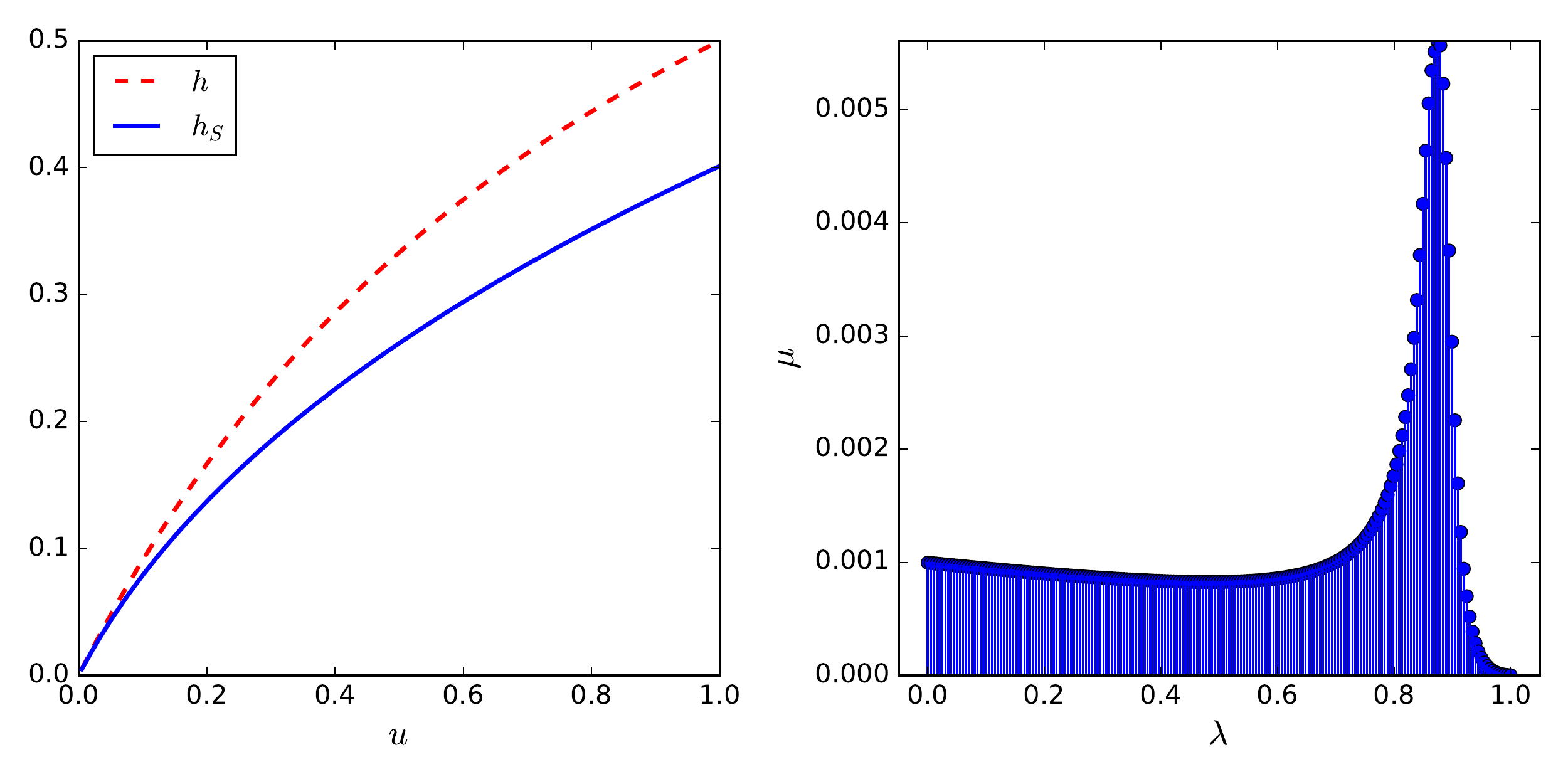}         
                      \caption{}
\label{inv_cont_problem_41}
 \end{subfigure}
   \begin{subfigure}{1\textwidth}
\centering    
                   \includegraphics[width=0.75\textwidth]{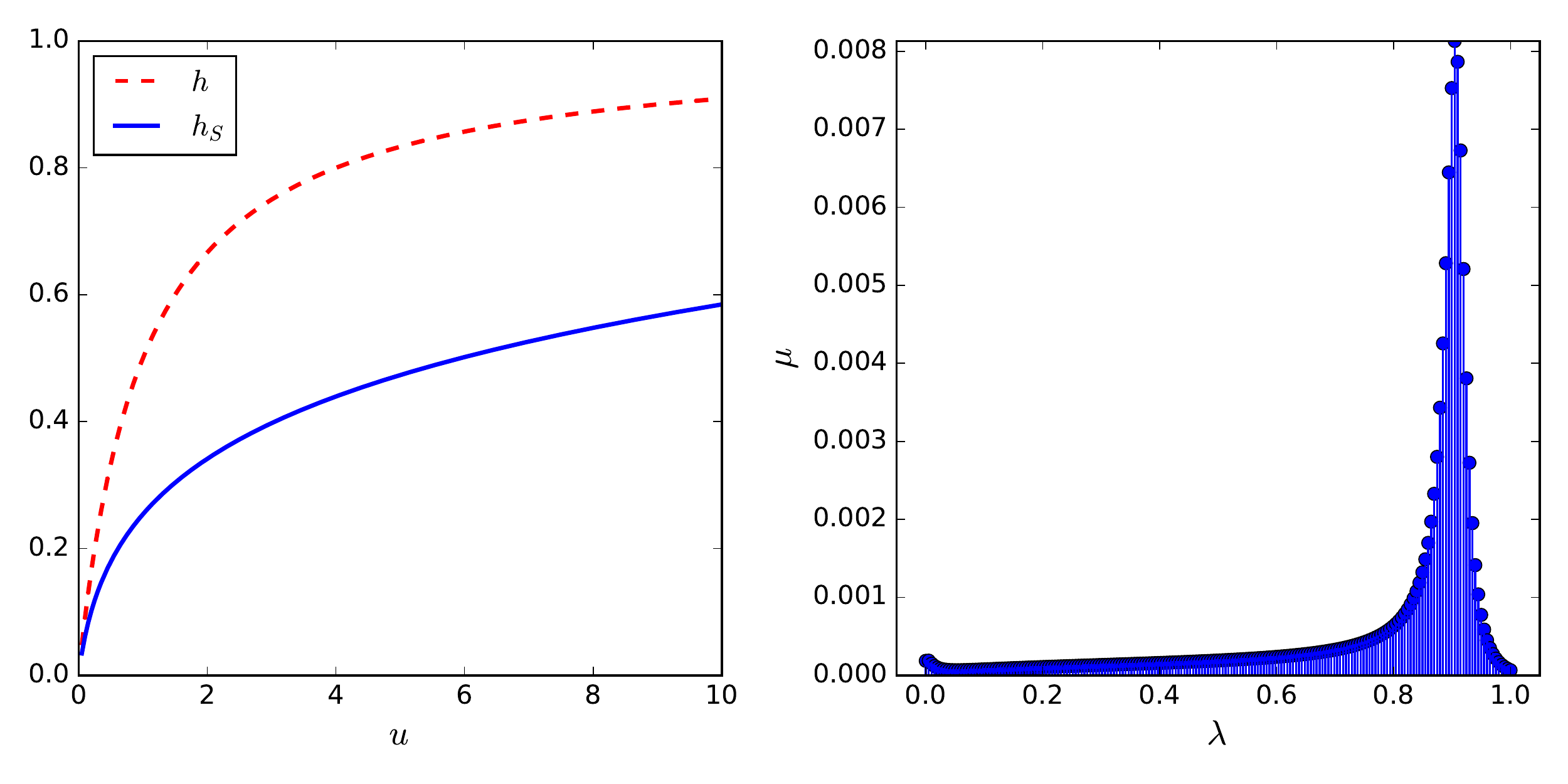}         
                      \caption{}
\label{inv_cont_problem_410}
 \end{subfigure}
 \caption{The smoothed function $h_S$ and the corresponding measure $\mu$, when
$h(u) = 1-\frac{1}{u+1}$. The smooth $h_S$ is found by solving the convex
problem \eqref{cont-problem} via discretization of measure $\mu$. In (a),
$\gamma = 1$ and $u_{\max} = 1$. In (b), $\gamma = 4$ and $u_{\max} = 1$. In
(c), $\gamma = 4$ and $u_{\max} = 10$.}
 \label{inv_cont_examples}
 \end{figure}

\clearpage

\section{Analysis of the sequential algorithm}
\label{sec:comp-ratio-sequential}
To design the functions $H_S$ and $G_S$ for the sequential algorithm (Algorithm~\ref{algorithm1}), 
we need additional information about the problem data. Since Algorithm~\ref{algorithm1} uses the dual 
variable from the \emph{previous} time step to assign the primal variable, the maximum length of each primal step, 
which is captured by the following two parameters, plays a role in the competitive ratio. 
Let  
\begin{align} \label{eq:rho-def2}
\rho_1 \geq \max_{t} c_t,  \quad\textup{and}\quad \rho_2 \geq  \max_{t} \lambda_{\max}\left(A_t\right).\end{align}
The next lemma is an analogue of Lemma~\ref{comp-ratio-pre} but for the sequential algorithm. 
\begin{lemma}
\label{comp-ratio-pre-seq2}
Let $G_S$ satisfy Assumption~\ref{assumption-gs}, $h_S$ satisfy Assumption~\ref{assumption-hs}, and 
suppose that the corresponding trace function $H_S$ satisfies the PSD-DR assumption. Suppose that 
	$\theta \leq c_t^{-1}\tr(A_t) \leq \Theta$ for all $t$, that $\rho_1$ and $\rho_2$ are defined as 
	in~\eqref{eq:rho-def2}, and 
	$u_{\max} \geq \lambda_{\max}\left(\sum_{t=1}^{m}A_t\hat{x}_t\right)$.~\footnote{Note 
	that we could choose, for instance, 
	$u_{\max} = b'\max_t c_t^{-1}\lambda_{\max}(A_t)$, but for certain classes of problems better bounds 
	may be available.} Then
	\begin{enumerate}
	\item If for a given $\gamma \geq 1$, $G_S$ satisfies:
	\begin{align}
	 &\!\!\!\!\gamma \left[G_S\left(u\right) -\rho_1 G_S'\left(u\right)\right] 
		\leq G^*(G_S'(u)) + \frac{\gamma}{e-1}h(\theta u)\quad \forall u\in [0,\infty)\label{eq:G_gamma_seq2}
	\end{align}
	and there exists $\beta > 0 $ such that $h_S$ satisfy the following inequlity:
	\begin{align}
	 &\!\!\!\!\gamma \left[h_S\left(u\right)+\rho_2 \left(h_S'\left(0\right)- h_S'\left(u\right)\right)\right] 
		\leq h^*(h_S'(u)) + \beta h(u)\quad \forall u\in [0,u_{\max}]\label{eq:h_beta_seq2}
	\end{align}
	then the iterates $\hat{x}_1,\ldots,\hat{x}_m$ of Algorithm~\ref{algorithm1} satisfy
	\begin{equation}
	\label{eq:crbound_seq2}
	 \textstyle{H\left(\sum_{t=1}^{m}A_t\hat{x}_t\right)} \geq \frac{1}{\gamma/(e-1)+\beta} D^\star.
	\end{equation}
	\item The iterates $\hat{x}_1,\ldots,\hat{x}_m$ of Algorithm~\ref{algorithm1} satisfy
	\begin{equation}
	\label{eq:bprime_seq2}
	 \sum_{t=1}^{m}c_t\hat{x}_t \leq b'\quad\textup{where}\quad b':= \rho_1 + \inf\{u: G_S'(u) \leq -h'(0)\Theta\}.
	\end{equation}
	\end{enumerate}
\end{lemma}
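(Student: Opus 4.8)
The plan is to mirror the two-part structure of the proof of Lemma~\ref{comp-ratio-pre}, establishing the budget bound \eqref{eq:bprime_seq2} and the competitive-ratio bound \eqref{eq:crbound_seq2} in turn. The essential new feature, relative to the simultaneous case, is that Algorithm~\ref{algorithm1} assigns $\hat{x}_t$ using the \emph{previous} dual iterates $\hat{Y}_{t-1},\hat{z}_{t-1}$, so the sequential duality-gap estimate \eqref{duality-gap-seq} carries two extra error terms, $E_A := \sum_{t} \inner{A_t\hat{x}_t}{\hat{Y}_t-\hat{Y}_{t-1}}$ and $E_c := \sum_{t} c_t\hat{x}_t(\hat{z}_t-\hat{z}_{t-1})$. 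The parameters $\rho_1,\rho_2$ enter precisely through lower bounds on these terms, and the whole argument amounts to absorbing those lower bounds into the modified smoothing inequalities \eqref{eq:G_gamma_seq2} and \eqref{eq:h_beta_seq2}. Throughout I write $U=\sum_t A_t\hat{x}_t$ and $u=\sum_t c_t\hat{x}_t$, so that $\hat{Y}_m=\nabla H_S(U)$ and $\hat{z}_m=G_S'(u)$.

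For the budget bound I would argue by contradiction as in Lemma~\ref{comp-ratio-pre}, accounting for the one-step delay. Set $u^\star := \inf\{u: G_S'(u)\leq -h'(0)\Theta\}$ and suppose $T$ is the smallest index with $\sum_{t=1}^{T}c_t\hat{x}_t > \rho_1 + u^\star$. Minimality forces $\hat{x}_T=1$, and since $c_T\leq\rho_1$ we get $\sum_{t=1}^{T-1}c_t\hat{x}_t > u^\star$, whence $\hat{z}_{T-1}=G_S'\!\left(\sum_{t=1}^{T-1}c_t\hat{x}_t\right)\leq -h'(0)\Theta$ because $G_S'$ is nonincreasing. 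Using that $\sum_{t=1}^{T-1}A_t\hat{x}_t\succeq 0$ and that $H_S$ is PSD-DR gives $\hat{Y}_{T-1}\preceq\nabla H_S(0)=h_S'(0)I$, so $\inner{A_T}{\hat{Y}_{T-1}}\leq h'(0)\tr(A_T)$. Combining, $c_T\hat{z}_{T-1}+\inner{A_T}{\hat{Y}_{T-1}}\leq h'(0)(\tr(A_T)-c_T\Theta)\leq 0$ since $\tr(A_T)\leq c_T\Theta$, and the assignment rule then forces $\hat{x}_T=0$, a contradiction. The extra $\rho_1$ in \eqref{eq:bprime_seq2} is exactly the slack created by the delay.

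For the competitive-ratio bound I would first establish the two error-term bounds. Since $G_S'$ is nonincreasing and $0\leq c_t\hat{x}_t\leq\rho_1$, each summand of $E_c$ obeys $c_t\hat{x}_t(\hat{z}_t-\hat{z}_{t-1})\geq\rho_1(\hat{z}_t-\hat{z}_{t-1})$, and telescoping (with $\hat{z}_0=G_S'(0)=0$) gives $E_c\geq\rho_1 G_S'(u)$. For $E_A$, the key matrix step uses $0\preceq A_t\hat{x}_t\preceq\rho_2 I$ together with $\hat{Y}_t-\hat{Y}_{t-1}\preceq 0$ (PSD-DR): because the trace inner product of two negative semidefinite matrices is nonnegative, $\inner{A_t\hat{x}_t}{\hat{Y}_t-\hat{Y}_{t-1}}\geq\rho_2\tr(\hat{Y}_t-\hat{Y}_{t-1})$, and telescoping yields $E_A\geq\rho_2\tr(\nabla H_S(U)-\nabla H_S(0))=-\rho_2 S$, where $S:=\sum_i(h_S'(0)-h_S'(\lambda_i(U)))\geq 0$. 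I would then reproduce the inequality chain from Lemma~\ref{comp-ratio-pre}, starting from \eqref{duality-gap-seq}, substituting these two bounds, invoking \eqref{eq:G_gamma_seq2} together with $H(U)\geq h(\theta u)$ from \eqref{eq:theta-ineq} for the $G$-terms, and the eigenvalue-summed \eqref{eq:h_beta_seq2}, namely $\gamma[H_S(U)+\rho_2 S]\leq H^*(\hat{Y}_m)+\beta H(U)$, for the $H$-terms. All terms carrying a factor $\gamma-1$ collect into $(\gamma-1)\bigl[H_S(U)+G_S(u)-\rho_1 G_S'(u)+\rho_2 S\bigr]$, and the remainder is exactly $\bigl(1-\tfrac{\gamma}{e-1}-\beta\bigr)H(U)$. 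Provided the $(\gamma-1)$-bracket is nonnegative (addressed below), we obtain $H(U)-D_{\rm seq}\geq\bigl(1-\tfrac{\gamma}{e-1}-\beta\bigr)H(U)$, and Lemma~\ref{DseqDstar} ($D_{\rm seq}\geq D^\star$) then gives \eqref{eq:crbound_seq2}.

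The step I expect to require the most care is the nonnegativity of that bracket, i.e.\ the sequential analogue of the ``increasing'' inequality \eqref{eq:increasing-seq}. Here the primal rule cannot be applied directly, since it involves $\hat{Y}_{t-1},\hat{z}_{t-1}$ rather than $\hat{Y}_t,\hat{z}_t$. Instead I would apply concavity of $H_S$ and $G_S$ to each increment, split $\hat{Y}_t=\hat{Y}_{t-1}+(\hat{Y}_t-\hat{Y}_{t-1})$ and $\hat{z}_t=\hat{z}_{t-1}+(\hat{z}_t-\hat{z}_{t-1})$, and telescope to obtain $H_S(U)+G_S(u)\geq\sum_t\hat{x}_t\bigl(\inner{A_t}{\hat{Y}_{t-1}}+c_t\hat{z}_{t-1}\bigr)+E_A+E_c\geq E_A+E_c$, the last inequality being the primal rule. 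Chaining this with the already-established $E_A+E_c\geq\rho_1 G_S'(u)-\rho_2 S$ gives $H_S(U)+G_S(u)-\rho_1 G_S'(u)+\rho_2 S\geq 0$, so the bracket is nonnegative since $\gamma\geq 1$. The delicate points are the matrix manipulations for $E_A$ --- the semidefinite ordering $A_t\hat{x}_t\preceq\rho_2 I$ and the sign of trace inner products of semidefinite matrices --- since everything else parallels the proof of Lemma~\ref{comp-ratio-pre}.
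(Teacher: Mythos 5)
Your proposal is correct and follows essentially the same route as the paper's proof: the same contradiction argument with a one-step delay for the budget bound, the same error-term bounds (the paper's inequality \eqref{rho-bound2}), the same telescoped ``increasing'' inequality \eqref{increasing-seq2}, and the same combination with \eqref{duality-gap-seq}, \eqref{eq:G_gamma_seq2}, the eigenvalue-summed \eqref{eq:h_beta_seq2}, and Lemma~\ref{DseqDstar}. Your only deviation is cosmetic bookkeeping --- collecting all $(\gamma-1)$-weighted terms into a single nonnegative bracket rather than applying the two auxiliary inequalities sequentially --- which amounts to the identical linear combination of inequalities used in the paper.
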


\begin{proof}
 	First we show that $\sum_{t=1}^{m}c_t\hat{x}_t \leq b'$. 
 	Arguing by contradiction, let $T$ be the smallest index such that 
 	$\sum_{t=1}^{T+1}c_t\hat{x}_t > b'$. Then, $\sum_{t=1}^{T}c_t\hat{x}_t > b'-\rho_1$ (since $c_{T+1}\hat{x}_{T+1} \leq \rho_1$).
	We have that 
	$\hat{z}_T = G_S'\left(\sum_{t=1}^{T}c_t\hat{x}_t\right) < -h'(0)\Theta$, 
	where the inequality holds by the definition of $b'$. By the PSD-DR assumption, 
 	$\nabla H_S\left(\sum_{t=1}^{T}A_t\hat{x}_t\right) \preceq \nabla H_S(0) = h_S'(0)I$. Then
 	\begin{equation*}
  	\inner{\hat{Y}_T}{A_{T+1}} + c_{T+1}\hat{z}_T  \leq c_{T+1}\left(h_S'(0)c_{T+1}^{-1}\tr(A_{T+1}) + \hat{z}_T\right) < 0.
	\end{equation*}
	It follows that $\hat{x}_{T+1}=0$, contradicting our choice of $T$. Hence $\sum_{t=1}^{m}c_t\hat{x}_t \leq b'$. 

 	We now bound on the competitive ratio. 
  	Let $U = \sum_{t=1}^{m} A_t \hat{x}_t$, $u = \sum_{t=1}^{m}c_t \hat{x}_t$, 
 	$\hat{Y}_m = \nabla H_S(U)$, and $\hat{z}_m = G_S'(u)$. 
	First we note that
	\begin{align}
	 \sum_{t=1}^{m}\left[\inner{A_t\hat{x}_t}{\hat{Y}_{t-1} - \hat{Y}_{t}} + c_t\hat{x}_t(\hat{z}_{t-1} - \hat{z}_{t})\right]
	&\leq \sum_{t=1}^{m}\left[\rho_2\tr(\hat{Y}_{t-1} - \hat{Y}_t) + \rho_1(\hat{z}_{t-1} - \hat{z}_t)\right]\nonumber\\
	& = \rho_2(\tr(\hat{Y}_0 - \hat{Y}_m)) - \rho_1(\hat{z}_m)\label{rho-bound2}
	\end{align}
	where the 
	inequality holds because $\hat{Y}_{t} \preceq \hat{Y}_{t-1}$ (by the PSD-DR assumption), 
	and, similarly, $\hat{z}_m \leq \cdots \leq \hat{z}_0 = 0$ (since $G_S'(0) = 0$). 
 	By the primal allocation rule in Algorithm \ref{algorithm1}, we have 
 		$\hat{x}_t \left( c_{t} \hat{z}_{t-1} + \inner{A_{t}}{\hat{Y}_{t-1}}\right) \geq 0$.
 	Combining this with the concavity of $H_S$ and $G_S$, we get
 	\begin{align*}
 		H_S\!\left(\textstyle{\sum_{s=1}^{t}} A_s\hat{x}_s\!\right) +
 		G_S\!\left(\textstyle{\sum_{s=1}^{t}}c_s \hat{x}_s\!\right) - 
 		H_S\!\left(\textstyle{\sum_{s=1}^{t-1}}A_s\hat{x}_s\!\right) - 
 		G_S\!\left(\textstyle{\sum_{s=1}^{t-1}}c_s \hat{x}_s\!\right) \\
 		 + \hat{x}_t\!\left(c_t (\hat{z}_{t-1} - \hat{z}_{t}) + \inner{A_t}{\hat{Y}_{t-1} - \hat{Y}_{t}}\right) \!\geq 0.
 	\end{align*}
	Taking the sum over $t$, telescoping, and using $h_S(0) = G_S(0) = 0$, gives
 	\begin{align}\label{increasing-seq2}
 	H_S\left(U\right)
	+ G_S\left(u\right)
	 + \sum_{t=1}^{m}\left[\inner{A_t\hat{x}_t}{\hat{Y}_{t-1} - \hat{Y}_t} + 
		c_t\hat{x}_t(\hat{z}_{t-1} - \hat{z}_t)\right]\geq 0.
 	\end{align} 

Now, the proof follows the same step as the proof of Lemma~\ref{comp-ratio-pre}
and uses \eqref{duality-gap-seq} and the above inequalities.
\begin{align*}
&H\left(U\right)  - {D_{\rm seq}}  \quad \quad \text{ } \\
&\geq H\left(U\right) + H^*(\hat{Y}_m) -H_S\left(U\right) - G_S\left(u\right) +  G^*\left(\hat{z}_m\right)   \\
& \quad +  \sum_{t=1}^{m} \hat{x}_t\left[\inner{ A_t }{\hat{Y}_{t} - \hat{Y}_{t-1}} + c_t\left(\hat{z}_{t} - \hat{z}_{t-1}\right)\right] \qquad \qquad\quad\qquad\qquad\qquad\qquad\;\; \text{ By \eqref{duality-gap-seq}} \\
&\geq \left[1 - \textstyle{\frac{\gamma}{e-1}}\right] H(U) +  H^*(\hat{Y}_m)-H_S\left(U\right) - \left(1-\gamma\right) G_S\left(u\right)   - \gamma \rho_1 G_S'\left(u\right)\\
 &\quad +  (\gamma + (1-\gamma))\sum_{t=1}^{m} \hat{x}_t\left[\inner{ A_t }{\hat{Y}_{t} - \hat{Y}_{t-1}} + c_t\left(\hat{z}_{t} - \hat{z}_{t-1}\right)\right] \qquad\quad\;\;  \textup{By \eqref{eq:G_gamma_seq2} and \eqref{eq:theta-ineq}} \\
&\geq \left[1 - \textstyle{\frac{\gamma}{e-1}}\right] H\left(U\right) +  H^*(\hat{Y}_m)-H_S\left(U\right) + \left(1-\gamma\right) H_S\left(U\right)  - \gamma \rho_1 \hat{z}_m\\
& \quad +  \gamma\sum_{t=1}^{m} \hat{x}_t\left[\inner{ A_t }{\hat{Y}_{t} - \hat{Y}_{t-1}} + c_t\left(\hat{z}_{t} - \hat{z}_{t-1}\right)\right] \qquad \qquad\qquad\qquad\qquad\qquad  \textup{ By \eqref{increasing-seq2}} \\
&\geq  \left[1 - \textstyle{\frac{\gamma}{e-1}}\right] H(U) +  H^*(\hat{Y}_m) - \gamma \left[H_S(U) + \rho_2 \tr\left(\hat{Y}_0 - \hat{Y}_m\right)\right] \qquad\quad\qquad \text{ By \eqref{rho-bound2}}\\
&\geq \left[1-\textstyle{\frac{\gamma}{e-1}} -\beta\right] H\left(U\right)\qquad\qquad\qquad\qquad\quad\;\;\;\;\,\qquad\qquad\qquad\qquad\qquad\qquad\textup{By \eqref{eq:h_beta_seq2}}  .
\end{align*}
Applying Lemma~\ref{DseqDstar} completes the proof.
\qed
\end{proof}

For the sequential algorithm to minimize the bound $b'$ on the budget consumption we choose $G_S$ such that 
\vspace{-3pt}
\begin{align}\label{G_S_seq}
G_S'\left(u\right) = - \frac{\theta\gamma}{(b+\rho_1 \gamma)(e-1)} 
	\int_{0}^{u} \exp\left(\frac{\gamma}{b + \rho_1 \gamma} (u-v)\right) h'\left(\theta v\right) \; d v.
\end{align}
To find $h_S$ for the sequential algorithm, the problem \eqref{cont-problem} is modified to:
\begin{align}\label{cont-problem-seq}
 &\mbox{minimize}_{\beta,y,\mu}{ \quad  \beta}\\ \notag
&\mbox{subject to} \; \gamma\!\int_{0}^{u}\! y(s) ds + \gamma \rho_2 (y(0)-y(u))  - h^*(y(u)) \leq \beta  h\left(u\right)\;\; \forall u \in [0,u_{\max}]\\ \notag
& \quad y(t) = \int_{0}^{1} \frac{1}{t \lambda + (1-\lambda)} \; d \mu(\lambda)\\
&\quad \textup{$\mu$ a positive measure supported on $[0,1]$.} \notag
\end{align}
Let $\beta(\gamma)$ be the optimal value of $\beta$ in~\eqref{cont-problem-seq}. 
Combining our arguments 
gives the following analogue of Theorem~\ref{comp-ratio} for the sequential algorithm. It describes the 
tradeoff between budget consumption and competitive ratio achieved for the sequential algorithm when $G_S$ and $H_S$
are designed optimally.
 \begin{thm}\label{comp-ratio-sequential}
	 If $G_S$ is defined by~\eqref{G_S_seq}, and $H_S$ is the trace function corresponding to $\int_{0}^uy(s)\;ds$, 
	where $y$ is optimal for~\eqref{cont-problem-seq}, then the iterates, $\hat{x}_1,\ldots,\hat{x}_m$ of Algorithm~\ref{algorithm1} satisfy
   	\begin{align*}
 	\sum_{t=1}^{m}c_t \hat{x}_t \leq G_S'^{-1}( -h'(0)\Theta)+\rho_1
\quad\textup{and}\quad
 		\textstyle{H\left( \sum_{t=1}^{m} A_t \hat{x}_t \right) \geq  \frac{1}{\gamma/ (e-1) + \beta(\gamma)} D^\star}
   	\end{align*}
where $c_t^{-1}\tr(A_t)\leq \Theta$ for all $t$.
\end{thm}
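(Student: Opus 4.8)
The plan is to recognize that Theorem~\ref{comp-ratio-sequential} is the sequential counterpart of Theorem~\ref{comp-ratio} and to reduce it entirely to Lemma~\ref{comp-ratio-pre-seq2}: once I check that the specific $G_S$ from~\eqref{G_S_seq} meets hypothesis~\eqref{eq:G_gamma_seq2} and that the specific $H_S$ built from the optimal $y$ of~\eqref{cont-problem-seq} meets hypothesis~\eqref{eq:h_beta_seq2} and is PSD-DR, the two conclusions~\eqref{eq:crbound_seq2} and~\eqref{eq:bprime_seq2} of that lemma become exactly the two displayed inequalities of the theorem, modulo a cosmetic rewriting of the budget bound. So the real content is verifying the two design constraints for the chosen functions.

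First I would verify that $G_S$ defined by~\eqref{G_S_seq} satisfies~\eqref{eq:G_gamma_seq2}, mirroring the argument used for~\eqref{G_S} in the simultaneous case. Since $h$ is increasing, the integrand in~\eqref{G_S_seq} is nonnegative while the prefactor is negative, so $G_S'(u)\le 0$ for all $u\ge 0$; hence $G^*(G_S'(u)) = bG_S'(u)$. I would then argue that~\eqref{G_S_seq} is precisely the solution, subject to $G_S'(0)=0$, of the linear constant-coefficient ODE obtained by imposing equality in~\eqref{eq:G_gamma_seq2} and differentiating. Writing that equality (after substituting $G^*(G_S'(u))=bG_S'(u)$) as $\gamma G_S(u) = (b+\gamma\rho_1)G_S'(u) + \tfrac{\gamma}{e-1}h(\theta u)$ and differentiating gives
\[
(b+\gamma\rho_1)G_S''(u) = \gamma G_S'(u) - \frac{\gamma\theta}{e-1}h'(\theta u),
\]
whose solution with $G_S'(0)=0$ is the convolution of $-\tfrac{\gamma\theta}{(b+\gamma\rho_1)(e-1)}h'(\theta u)$ with $\exp\!\big(\tfrac{\gamma}{b+\gamma\rho_1}u\big)$, i.e.\ exactly~\eqref{G_S_seq}. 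Checking that both sides of the equality version agree at $u=0$ (both equal $0$, using $G_S(0)=0$, $G_S'(0)=0$, and $h(0)=0$) then upgrades the ODE identity to equality, hence to the inequality~\eqref{eq:G_gamma_seq2}, for all $u\ge 0$. The one point to get right here is that the shifted exponential rate $\gamma/(b+\rho_1\gamma)$---rather than $\gamma/b$ as in the simultaneous case---is exactly what absorbs the extra $-\rho_1 G_S'(u)$ term on the left-hand side of~\eqref{eq:G_gamma_seq2}.

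Next I would handle $H_S$. With $h_S(u)=\int_0^u y(s)\,ds$ and $y$ feasible (indeed optimal) for~\eqref{cont-problem-seq}, substituting $h_S'(u)=y(u)$ and $h_S'(0)=y(0)$ shows that the functional constraint of~\eqref{cont-problem-seq} is literally~\eqref{eq:h_beta_seq2} with $\beta=\beta(\gamma)$, on the same interval $[0,u_{\max}]$. Because $y$ is written in the L\"owner integral form against a positive measure $\mu$ supported on $[0,1]$, Proposition~\ref{prop:lowner} guarantees that the associated trace function $H_S$ is PSD-DR; the same representation makes $y$ nonincreasing (so $h_S$ is concave) and gives $h_S(0)=0$, and, under the normalization $h_S'(0)=y(0)=h'(0)$ built into the design, Assumption~\ref{assumption-hs} holds. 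Thus $H_S$ satisfies every hypothesis that Lemma~\ref{comp-ratio-pre-seq2} places on it, with the relevant constant equal to the optimum $\beta(\gamma)$ of~\eqref{cont-problem-seq}.

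Finally I would invoke Lemma~\ref{comp-ratio-pre-seq2}. Its conclusion~\eqref{eq:crbound_seq2} is exactly the claimed competitive-ratio bound $H\!\left(\sum_t A_t\hat{x}_t\right)\ge \tfrac{1}{\gamma/(e-1)+\beta(\gamma)}D^\star$. Its budget conclusion~\eqref{eq:bprime_seq2} reads $\sum_t c_t\hat{x}_t \le \rho_1 + \inf\{u:G_S'(u)\le -h'(0)\Theta\}$; since $G_S$ is concave, $G_S'$ is strictly decreasing on the relevant range, so the infimum equals $G_S'^{-1}(-h'(0)\Theta)$, yielding $\sum_t c_t\hat{x}_t\le G_S'^{-1}(-h'(0)\Theta)+\rho_1$ as stated. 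I expect the main obstacle to be purely bookkeeping: confirming the ODE/convolution identity for $G_S$ with the modified decay rate and tracking the $\rho_1,\rho_2$ shift terms so that the designed functions match the hypotheses of Lemma~\ref{comp-ratio-pre-seq2} exactly; the optimization-side facts (feasibility of $y$ and PSD-DR via L\"owner) are immediate from the construction and Proposition~\ref{prop:lowner}.
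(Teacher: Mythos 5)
Your proposal is correct and takes essentially the same approach as the paper: the paper's own proof of Theorem~\ref{comp-ratio-sequential} is simply to invoke Lemma~\ref{comp-ratio-pre-seq2} as the ``natural analogue'' of Theorem~\ref{comp-ratio}, which is exactly the reduction you carry out. Your explicit checks---the ODE/convolution argument showing \eqref{G_S_seq} satisfies \eqref{eq:G_gamma_seq2} with equality (with the shifted rate $\gamma/(b+\rho_1\gamma)$ absorbing the $-\rho_1 G_S'(u)$ term), and the observation that feasibility of $y$ in \eqref{cont-problem-seq} is literally \eqref{eq:h_beta_seq2} while Proposition~\ref{prop:lowner} gives PSD-DR---are accurate fillings-in of details the paper leaves implicit.
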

\begin{proof}
The proof is the natural analogue of Theorem~\ref{comp-ratio}.\qed
\end{proof}


\section{Numerical Experiments}
\label{sec:numerical-exp}
In this section, we provide the results of our numerical experiments and
discuss the numerical implementation of smoothing design.

In our numerical simulations, we consider the D-optimal ($H(U) = \log\det(I +
U)$) and the A-optimal ($H(U) = n - {\rm tr} ((I+U)^{-1})$) experiment design
problems. The $t^{th}$ matrix $A_t = a_t a_t^T$ is generated as follows: we
sample a vector $\eta$ uniformly at random from $\{-1,1\}^n$ and then set $a_t
= \frac{ \sqrt{m-t+1}} { \sqrt{n}} \eta$. This is an \emph{adversarial}
weighting, inspired by worst-case examples for online LP, e.g.,
\cite{buchbinder2009design}.

We compare the performance of Algorithm \ref{algorithm2} (sequential updates)
with two choices of $H_S$. In one case, we use the smoothed $H_S$ given by
solving \eqref{cont-problem}. In the second case, we use $H$ without smoothing
($H_S = H$). For both cases, we use the smooth $G_S$ given in \eqref{G_S}.
(Note that a pure greedy algorithm without smoothing $G$ simply picks the first
$b$ experiments and is a trivial algorithm, which we did not try.) We varied
the parameter $\gamma$ and plotted the competitive ratio versus the budget used
by the algorithm in Figure \ref{fig:simulation_result}. For each value of
$\gamma$, we have $10$ random repeats.

 \begin{figure}
 \begin{subfigure}{0.47\textwidth}
\centering    
                \includegraphics[width=.9\textwidth]{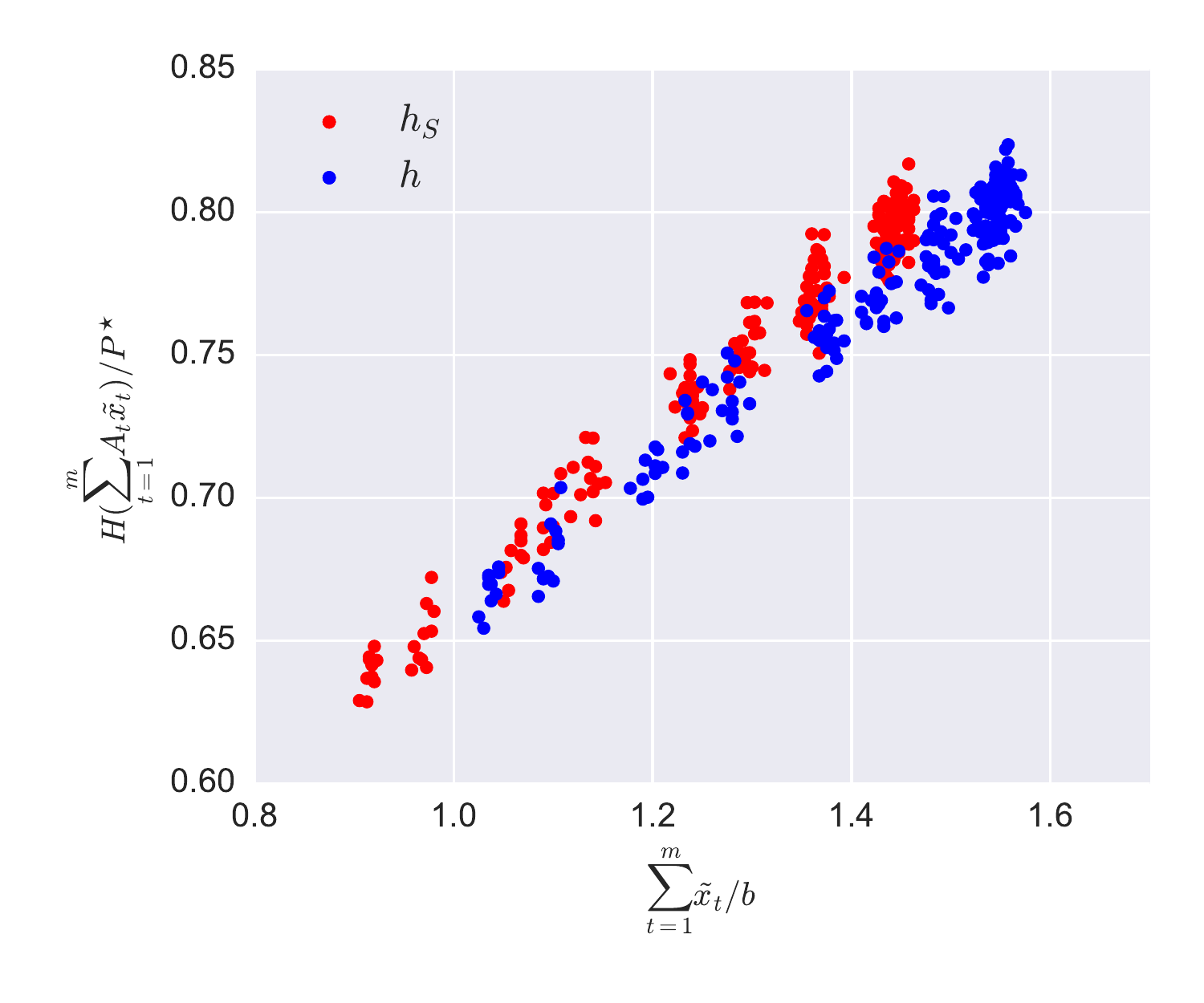}        
             \label{}   
 \end{subfigure}
  \begin{subfigure}{0.47\textwidth}
\centering    
                \includegraphics[width=.9\textwidth]{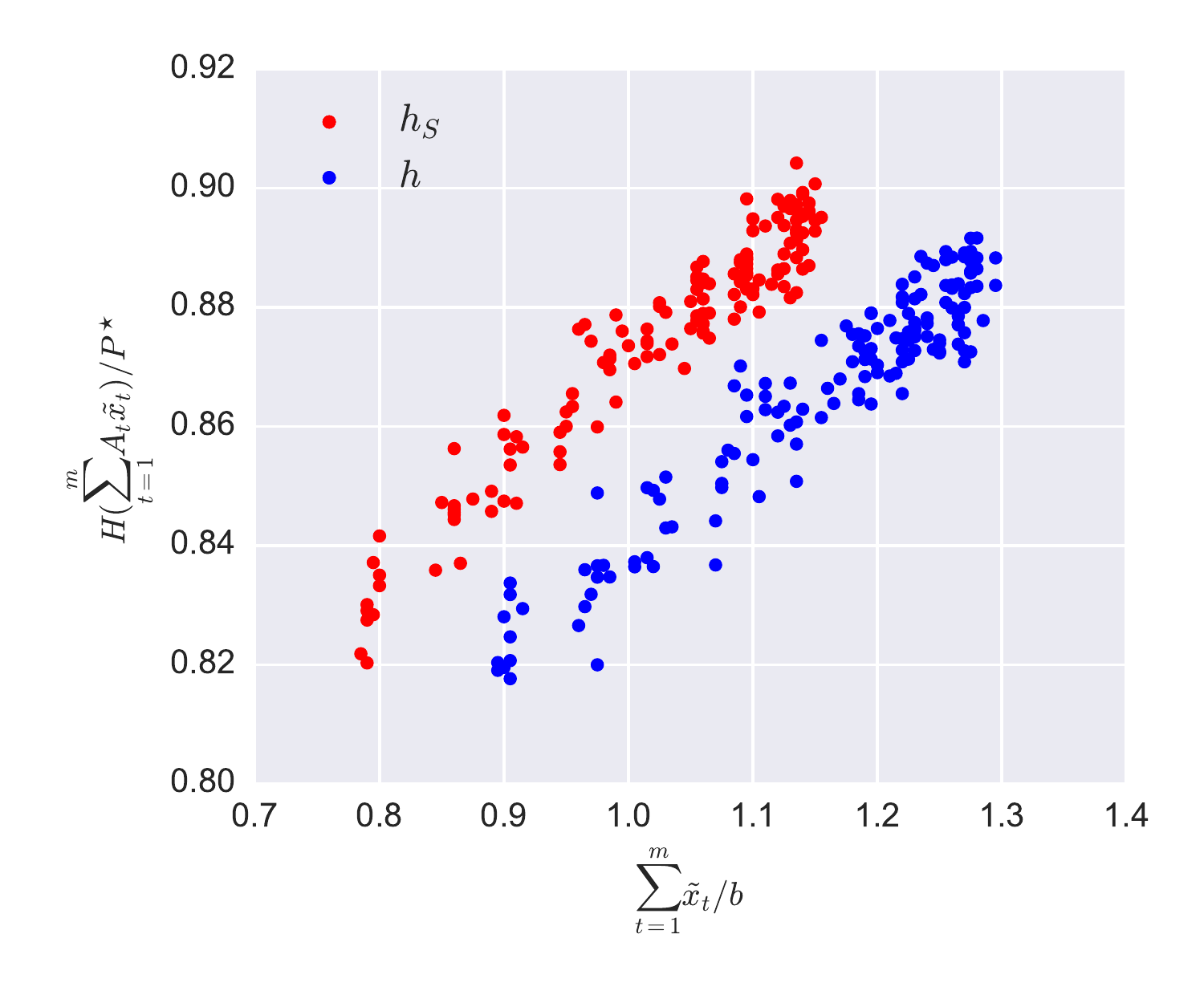}        
              \label{}     
 \end{subfigure}
\caption{Performance of Algorithm 2 (sequential updates) for online D-optimal
and A-optimal experiment design with two choices of $H_S$. (Left) D-optimal
experiment design. (Right) A-optimal experiment design. }
 \label{fig:simulation_result}\end{figure}

Next, we briefly discuss the numerical implementation of the smoothing design
problems for the simultaneous algorithm, introduced in
Section~\ref{sec:compratio}. Similar ideas apply for the smoothing design
problems for the sequential algorithm. We note that the algorithm does not
require $G_S$ itself, but only $G_S'(u)$. The formula for $G_S'$ can be
computed up to desired accuracy using Gauss-Leguerre
quadrature~\cite{reiz1943numerical}.

To solve problem \eqref{cont-problem}, we restrict $\mu$ to be an atomic
measure supported on the $q+1$ points $\lambda_j = j/q\in [0,1]$ for
$j=0,1,\ldots,q$.  The decision variables are then $\beta$ and
$\mu_j:=\mu(\lambda_j)$ for $j=0,1,\ldots,q$. Rather than imposing the
constraint for all $u\in [0,u_{\max}]$ we impose it on a non-uniformly sampled
subset. In particular, we sample $u$ more densely where $h$ has a larger local
Lipschitz constant by choosing the discretization points to be $u_i =
h^{-1}\left(i \; {u_{\rm max}}/{d}\right)$ for $i=0,1,\ldots,d$.
The optimal $y = h_S'$, which is all that is needed for the algorithm, is 
$y(u) = \sum_{j =0}^{q}  \frac{\mu_j}{u \lambda_j + \left(1-\lambda_j\right)}$.
Note that since the integral of every individual function in the summand 
representation of $h_S'$ satisfies PSD-DR, $h_S$ satisfies PSD-DR.

\section{Related Work and Discussion} 
\label{sec:related_work}

\paragraph{Submodularity and experiment design.}
Algorithms for the \emph{offline} optimal experiment design (both with and
without integer constraints) have been extensively studied
\cite{pukelsheim1993optimal}. 
Let $X_S$ denote a principal submatrix of $X$, then it is well known that the
set function $S \mapsto \log\det(X_S)$ is submodular. Based on this, greedy
subset selection is shown to have a $1-1/e$ approximation ratio
\cite{bouhtou2010submodularity,shamaiah2010greedy,seeger2009submodularity} for
the (integer) experiment selection problem.  The paper
\cite{bian2017guarantees} also gives an approximation ratio for the weakly
submodular A-optimal design problem; also see \cite{wang2016computationally}
and references therein for statistical bounds for solving the convex relaxation
followed by a greedy post-processing. In the existing literature, however,
algorithms are assumed to have access to all possible experiments at the start,
and therefore the result do not apply to the worst-case \emph{online} setting
that we consider. There is a connection between analysis of online algorithm
under stochastic i.i.d setting and greedy algorithm for submodular
maximization. We refer the reader to \cite{eghbali2017thesis} for the details
of this connection.

\paragraph{Online SDP problem of \cite{elad2016online}.} 
To the best of our knowledge, \cite{elad2016online} is the only existing work
that studies an online semidefinite program.  While the problem considered is
different from ours and its results do not apply to our setup, we briefly
discuss the idea.  \cite{elad2016online} considers a generalization of the
online \emph{covering} linear program to the semidefinite cone as follows,
\[ \textup{minimize}\;\; b^T y \quad\textup{subject to}\quad \begin{cases} \mathcal{A}(y) \succeq C & \\ y \geq 0,\end{cases}\]
where $\mathcal{A}$ denotes a linear map from vectors to matrices. The
algorithm receives a sequence of PSD matrices $C^{(0)} \preceq C^{(1)} \preceq
\ldots \preceq C^{(m)} = C$ over time, and needs to increase the variable $y$
so as to satisfy the new matrix covering constraint. The dual of this problem
is a packing problem, however the model for the online information the
algorithm receives is still the sequence of matrices $C^{(i)}$ which is
different from receiving a new experiment and bid price to decide on, as in our
model. Indeed, extending the \emph{packing} linear program to the PSD cone yields a different problem, i.e.,
\[ \textup{maximize}\;\;c^Tx \quad\textup{subject to}\quad \begin{cases}\mathcal{A}(x) \preceq B&\\ x \geq 0,&\end{cases}\]
which is in line with the setting we consider in this paper, after a
reformulation to bring the matrix constraint to the objective in a penalized
form (which corresponds to $H$). The possibility of using L\"{o}wner's
representation to design a matrix penalty function for this matrix-valued
budget is an interesting direction for future work.

\paragraph{Online Learning, Regret, and FTRL.} 
As mentioned earlier, the dual update in Algorithm \ref{algorithm1} is the same
as in Follow-the-Regularized-Leader (FTRL) algorithm with $-{H_S}^*$ as the
regularizer. This primal-dual perspective has been used in
\cite{shalev2007primal} for design and analysis of online learning algorithms.
In the online learning literature, the goal is to derive a bound on the
\emph{regret} that optimally depends on the horizon, $m$; whereas in this work
we study the competitive ratio for the algorithm that depends on the functions
$H$ and $G$.  In order to optimize the competitive ratio, the regularization
functions should be crafted based on $H$, and a general choice of
regularization which yields an optimal regret bound in terms of $m$ is not
enough for a competitive ratio argument, so existing results in online learning
do not address our aim. There are however some shared proof steps in the
analysis that are worth exploring further.

\subsection*{Acknowledgements}
The authors thank Omid Sadeghi-Meibodi for helpful comments. The work of MF and
RE was supported in part by grants ONR N000141612789, NSF CCF 1409836, NSF
Tripods 1740551, and ONR MURI N000141612710. Part of this work was done while
RE and MF were visiting the Simons Institute for the Theory of Computing,
partially supported by the DIMACS/Simons Collaboration on Bridging Continuous
and Discrete Optimization through NSF grant CCF-1740425.

\bibliographystyle{alpha}
\bibliography{OnRef}
\appendix 
\section{Additional proofs}
\label{app:Dgap}
Here, we provide additional proofs not given in detail in the body of the paper.

\begin{proof}[{Proof of Lemma~\ref{lem:duality-gap}}]
By the definition of $D_{\textup{sim}}$, the definition of $\tilde{x}_t$, and the concavity of $H_S$ and $G_S$, we have that 
\begin{align} \notag
	 D_{\textup{sim}} & =\textstyle{ \sum_{t=1}^{m}}\left[\inner{A_t\tilde{x}_t}{\tilde{Y}_t} + c_t\tilde{x}_t\tilde{z}_t\right] - H^*(\tilde{Y}_m) - G^*(\tilde{z}_m)\\ \notag
	& \leq\textstyle{ \sum_{t=1}^{m}}\textstyle{\left[H_S\left(\sum_{s=1}^{t}A_s\tilde{x}_s\right) - H_S\left(\sum_{s=1}^{t-1}A_s\tilde{x}_s\right)\right.}\\ \notag
	& \qquad\qquad \textstyle{\left. + G_S\left(\sum_{s=1}^{t}c_s\tilde{x}_s\right) - G_S\left(\sum_{s=1}^{t-1}c_s\tilde{x}_s\right)\right]} - H^*(\tilde{Y}_m) - G^*(\tilde{z}_m)\\ \notag
	& = \textstyle{H_S(\sum_{s=1}^{m}A_s\tilde{x}_s) + G_S(\sum_{s=1}^{m}c_s\tilde{x}_s)} - H^*(\tilde{Y}_m) - G^*(\tilde{z}_m).
\end{align}
The inequality follows from concavity of $G_S$ and $H_S$. The final equality holds by telescoping the sum and using the fact that $H_S(0) = 0 = G_S(0)$. 
For the sequential algorithm we can write:
\begin{align*}
	 D_{\textup{seq}} & = \textstyle{\sum_{t=1}^{m}}\left[\inner{A_t\hat{x}_t}{\hat{Y}_{t-1}} + c_t\hat{x}_t\hat{z}_{t-1}\right] - H^*(\hat{Y}_m) - G^*(\hat{z}_m)\\
	 & = \textstyle{\sum_{t=1}^{m}} \left[\inner{A_t\hat{x}_t}{\hat{Y}_{t}} + c_t\hat{x}_t\hat{z}_{t}\right]- H^*(\hat{Y}_m) - G^*(\hat{z}_m)  + 
	 \textstyle{\sum_{t=1}^{m}} \left[\inner{A_t\hat{x}_t}{\hat{Y}_{t-1} - \hat{Y}_{t}} + c_t\hat{x}_t\left(\hat{z}_{t-1} - \hat{z}_{t}\right)\right]
		\end{align*}
Now, the rest follows similar to steps as the simultaneous case.
\end{proof}

\begin{proof}[{Proof of Lemma~\ref{DseqDstar}}]
	We write out the argument for the inequality $D_{\textup{sim}} \geq
D^\star$. The argument showing that $D_{\textup{seq}} \geq D^\star$ is
identical. 
	We first show that the PSD-DR assumption on $H_S$ implies  
	\begin{equation}	\label{eq:Dstar-ineq}
		 \sum_{t=1}^{m}\left(\inner{A_t}{\tilde{Y}_t}+c_t\tilde{z}_t\right)_+ \geq \sum_{s=1}^{m}\left(\inner{A_s} {\tilde{Y}_m}+c_s\tilde{z}_m\right)_+.
	\end{equation}
	Since $A_s \in S_+^n$ and $\tilde{x}_s \geq 0$ for all $s\in [m]$, it follows that $\sum_{s=1}^{t}A_s \tilde{x}_s \preceq \sum_{s=1}^{m}A_s\tilde{x}_s$
	for all $t\in [m]$. Since $\tilde{Y}_t = \nabla H_S\left(\sum_{s=1}^{t}A_s\tilde{x}_s\right)$, 
	if $H_S$ satisfies the PSD-DR assumption then $\tilde{Y}_t \succeq \tilde{Y}_m$ for all $t\in [m]$.
	By a similar argument, since $G_S$ is concave, $\tilde{z}_t \geq \tilde{z}_m$ for all $t\in [m]$.  
	Since $A_t \in S_+^n$ and $c_t \geq 0$ for all $t\in [m]$,  
	\[ \inner{A_t}{\tilde{Y}_t} + c_t\tilde{z}_t \geq 
	\inner{A_t}{\tilde{Y}_m} + c_t\tilde{z}_t \geq \inner{A_t}{\tilde{Y}_m} + c_t\tilde{z}_m\]
	for all $t\in [m]$. Taking the positive part and then summing establishes~\eqref{eq:Dstar-ineq}. To conclude that 
	$D_{\textup{sim}}\geq D^\star$, we need only observe that $D^\star$ is a lower bound on the dual 
	objective~\eqref{eq:main-dual} evaluated at $(\tilde{Y}_m,\tilde{z}_m)$.  
\end{proof}

\end{document}